\definecolor{red}{rgb}{1,0,0}
\definecolor{blue}{rgb}{0,0,1}
\definecolor{green}{rgb}{0,.6,0}
\newtheorem{thm}{Theorem}[section]
\newtheorem{lem}[thm]{Lemma}
\newtheorem{prop}[thm]{Proposition}
\theoremstyle{definition}
\theoremstyle{definition}
\newtheorem{defn}[thm]{Definition}
\theoremstyle{definition}
\newtheorem{ex}[thm]{Example}
\newcommand{\la}{\langle}
\newcommand{\ra}{\rangle}
\def\({\left(}
\def\){\right)}
\title{The Polychromatic Number of Small Subsets of the Integers
Modulo $n$}
\author{Emelie Curl, John Goldwasser, Joe Sampson, Michael Young}
\begin{document}
\maketitle

\begin{abstract} 
\noindent 
If $S$ is a subset of an abelian group $G$, the {\em polychromatic number} of $S$ in $G$ is the largest integer $k$ so that there is a $k-$coloring of the elements of $G$ such that every translate of $S$ in $G$ gets all $k$ colors.  We determine the polychromatic number of all sets of size 2 or 3 in the group of integers mod n.   
\end{abstract}

\noindent {\bf Keywords} polychromatic coloring, abelian group, group tiling, complement set

\noindent{\bf AMS subject classification} 05D99, 20K01

\section{Introduction}\label{sintro}  
\indent Throughout this paper $G$ will denote an arbitrary abelian group. Given $S \subseteq G$, $a \in G$, $a + S = \{a+s| s \in S\}$. Any set of the form $a + S$ is called a {\em translate } of $S$. A $k$-coloring of the elements of $G$ is {\em $S-$polychromatic} if every translate of $S$ contains an element of each of the $k$ colors. The {\em polychromatic number} of $S$ in $G$, denoted $p_G(S)$, is the largest number of colors such that there exists an $S-$polychromatic coloring of $G$. The notation $p(S)$ is used when $G$ is the set of integers, $\mathbb{Z}$, and $p_n(S)$ is used when $G = \mathbb{Z}_n$, the group of integers $\bmod \hspace{1mm}n$.  In this paper, $p_n(S)$ is determined for all $n \ge 3$ and $|S| = 2$ or $3$.  The techniques used may be useful in determining $p_n(S)$ for larger sets $S$ and for other coloring problems.

The notions of polychromatic colorings and polychromatic number for sets in abelian groups can be extended.  If $G$ is any structure and $H$ is a family of substructures then a $k-$coloring of $G$ is  $H$-polychromatic if every member of $H$ gets all $k$ colors, and the polychromatic number $p_G(H)$ of $H$ in $G$ is the largest $k$ such that there is an $H$-polychromatic coloring with k colors.  In this paper, $G$ is $\mathbb{Z}_n$ and $H$ is the family of all translates of a subset $S$.  Alon et.al. \cite{AKS07}, Bialostocki \cite{Bia}, Offner \cite{Off08}, and Goldwasser et.al. \cite{GLMOTY16} considered the case when $G$ is an $n$-cube and $H$ is the family of all sub-$d$-cubes for some fixed $d \le n$.  Axenovich et. al. \cite{Axe2018} considered the case where $G$ is the complete graph on $n$ vertices and $H$ is the family of all perfect matchings or Hamiltonian cycles or 2-factors.  

If $S$ and $T$ are subsets of an abelian group $G$, we say $T$ is a blocking set for $S$ if $G \setminus T$ contains no translate of $S$.  Blocking sets are of interest in extremal combinatorics, because if $T$ is a minimum size blocking set for $S$ then $G \setminus T$ is a maximum size subset of $G$ with no translate of $S$, so is the solution to a Tur\'{a}n-type problem. It is well known (\cite{Axe2019},\cite{St86}) that $T$ is a complement set for $S$ if and only if $-T$ is a blocking set for $S$. Clearly each color class in an $S$-polychromatic coloring is a blocking set for $S$.
 
In \cite{Axe2019}, Axenovich et. al. considered the situation when $G$ is the group of integers and $H$ is the family of all translations of a set $S$ of 4 integers. They showed that the polychromatic number of any set $S$ of 4 integers in $\mathbb{Z}$ is at least 3, by finding a particular value of $n$ such that $3 \le p_n(S)$.  That implies that any set $S$ of size 4 has a blocking set in $\mathbb{Z}$ of density at most $1/3$, proving a conjecture of Newman about densities of complement sets.  

Whereas in \cite{Axe2019} it was shown that for each set $S$ of integers of size 4, there exists an integer $n$ such that $3 \le p_n(S)$, such an inequality does not hold for all $S$ and $n$.  For example, if $S = \{0,1,3,6\}$ and $n=11$, then $p_n(S)=2$.  It would be difficult to determine $p_n(S)$ for all values of $n$ and all sets $S$ of size 4, but in this paper these values are determined for all sets $S$ of size 3.

\begin{ex}\label{expolynum3}  Let $S = \{0,a,b\}$ be a subset of $\mathbb{Z}_n$ where $n$ is divisible by 3, $a \equiv 1 \hspace{1mm} (\bmod \hspace{1mm} 3)$, and $b \equiv 2 \hspace{1mm} (\bmod \hspace{1mm} 3)$.  Then $p_n(S) = 3$ as the coloring $RBYRBY\ldots$ is obviously $S-$polychromatic.
\end{ex}

\begin{ex}\label{exfanoplane} If $S = \{0,1,3\}$ and $n = 7$ then $p_n(S) = 1$.\\ 
\begin{figure}[h]
\begin{center}
\includegraphics[width=0.7\textwidth]{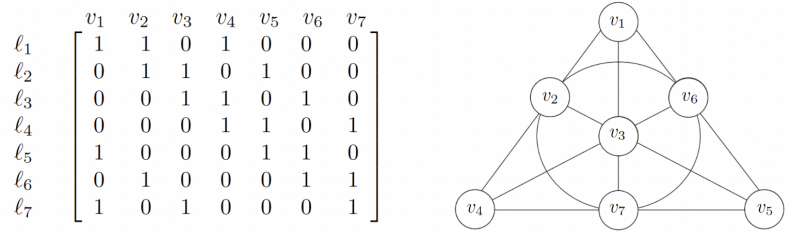}\label{fanoandmatfig}
\caption{Fano plane and an incidence matrix}
\end{center}
\end{figure}

Consider the above figure and note that the $7 \times 7$ circulant matrix is an incidence matrix for the Fano plane.  It is well known (and it is easy to check) that in any $2-$coloring of the vertices of the Fano plane there is a monochromatic edge, which implies there is no $S-$polychromatic $2-$coloring, so $p_7(S)=1$.
\end{ex}

The main result of this paper is that examples \ref{expolynum3} and \ref{fanoandmatfig} are essentially the only examples of sets $S$ of size three such that $p_n(S)$ is not equal to 2.

\section{Simplifying assumptions and the main theorem}

The polychromatic number of a set $S$ in $\mathbb{Z}_n$ is unchanged under certain operations involving translation, multiplication, and scaling.  If $|S|=3$ we can use those operations to convert a set $S$ to a set $S'$ which has the same polychromatic number, and has one of two specific forms. 

\begin{lem}\label{polyequiv}
If $1 \le d,t,n \in \mathbb{Z}$, $S=\{a_1, a_2, \ldots a_t \} \subseteq \mathbb{Z}_n$,  and $S'=\{da_1, da_2, \ldots da_t \}$, then $p_{dn}(S') = p_{n}(S)$.
\end{lem}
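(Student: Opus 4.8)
The plan is to exhibit an explicit bijective correspondence between $S$-polychromatic colorings of $\mathbb{Z}_n$ and $S'$-polychromatic colorings of $\mathbb{Z}_{dn}$, which will immediately give equality of the two polychromatic numbers. The key structural observation is that multiplication by $d$ is an injective group homomorphism $\mathbb{Z}_n \hookrightarrow \mathbb{Z}_{dn}$ whose image is the subgroup $d\mathbb{Z}_{dn}$ of index $d$; concretely, $x \mapsto dx$ identifies $\mathbb{Z}_n$ with $\{0, d, 2d, \ldots, (n-1)d\} \subseteq \mathbb{Z}_{dn}$, and the cosets of this subgroup are $j + d\mathbb{Z}_{dn}$ for $j = 0, 1, \ldots, d-1$.

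First I would show that any $S$-polychromatic $k$-coloring $c$ of $\mathbb{Z}_n$ lifts to an $S'$-polychromatic $k$-coloring $c'$ of $\mathbb{Z}_{dn}$: define $c'(x) = c(y)$ whenever $x \equiv dy \pmod{dn}$ on the subgroup $d\mathbb{Z}_{dn}$, and extend $c'$ to each of the other $d-1$ cosets by an arbitrary rotation/copy of this coloring (for instance $c'(j + dy) = c(y)$ for all $j$). The point is that every translate $a + S'$ of $S'$ in $\mathbb{Z}_{dn}$ is contained within a single coset $j + d\mathbb{Z}_{dn}$, since $S' = dS$ lies in the subgroup $d\mathbb{Z}_{dn}$; writing $a = j + dm$ with $0 \le j < d$, the translate is $j + d(m + S)$, and on that coset $c'$ restricts to (a translate of) $c$ composed with the identification, so it receives all $k$ colors because $m + S$ does under $c$. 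Hence $p_{dn}(S') \ge p_n(S)$.

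Conversely, given an $S'$-polychromatic $k$-coloring $c'$ of $\mathbb{Z}_{dn}$, I would restrict it to the coset $d\mathbb{Z}_{dn}$ and pull back along the identification to define $c(y) = c'(dy)$ on $\mathbb{Z}_n$. Every translate $m + S$ of $S$ in $\mathbb{Z}_n$ corresponds under this map to the translate $dm + dS = dm + S'$ of $S'$ in $\mathbb{Z}_{dn}$, which lies entirely in the coset $d\mathbb{Z}_{dn}$ and gets all $k$ colors under $c'$; therefore $m + S$ gets all $k$ colors under $c$. This gives $p_n(S) \ge p_{dn}(S')$, and combining the two inequalities yields the claim.

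The only genuinely delicate point is bookkeeping modulo $dn$ versus modulo $n$: one must check that the map $y \mapsto dy \bmod dn$ is well-defined and injective as a function of $y \bmod n$ (it is, since $dy_1 \equiv dy_2 \pmod{dn}$ iff $y_1 \equiv y_2 \pmod n$), and that translates of $S'$ genuinely stay inside a single coset. No real obstacle is expected — the result is essentially a change-of-variables identity — but care is needed so that the arbitrary extension to the other cosets in the forward direction does not accidentally break the polychromatic property, which is why it is important that $S' \subseteq d\mathbb{Z}_{dn}$ forces each translate into one coset.
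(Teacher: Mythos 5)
Your proposal is correct and is essentially the paper's own argument: copy the coloring of $\mathbb{Z}_n$ onto the subgroup $\langle d\rangle$ of $\mathbb{Z}_{dn}$ and duplicate it on the cosets for one inequality, and restrict an $S'$-polychromatic coloring to $\langle d\rangle$ for the other. You simply spell out the details (well-definedness of $y\mapsto dy$ and the fact that each translate of $S'$ lies in a single coset) that the paper dismisses with ``clearly.''
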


\begin{proof}
Any $S-$polychormatic coloring of $\mathbb{Z}_n$ can clearly be copied on the subgroup $\langle d \rangle$ of $\mathbb{Z}_{dn}$, and then duplicated on all the cosets of $\langle d \rangle$, to get an $S'-$polychromatic coloring of $\mathbb{Z}_{dn}$.  Going the other way, in any $S'-$polychromatic coloring of $\mathbb{Z}_{dn}$, the restricted coloring on $\langle d \rangle$ can be copied on $\mathbb{Z}_n$ to get an $S-$polychromatic coloring.
\end{proof}

Hence we can simply divide out a common factor of $n$ and the elements of $S$ without changing the polychromatic number.  Since we can also take any translation of $S$ without changing the polychromatic number, from now on we will assume that every set $S$ of size 3 in $\mathbb{Z}_n$ has the form $S = \{0, a, b\}$ where $\gcd(a,b,n) = 1$.  

\begin{lem}\label{polyequiv2}
Let $1 \le d,t,n \in \mathbb{Z}$ such that $d < n$ and $\gcd(d,n) = 1$. If $S'=\{da_1, da_2, \ldots da_t \}$ and $S=\{a_1, a_2, \ldots a_t \}$, then $p_{n}(S) = p_{n}(S')$.
\end{lem}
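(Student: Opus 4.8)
The statement says: if $\gcd(d,n)=1$, then scaling a set $S$ by $d$ inside $\mathbb{Z}_n$ does not change its polychromatic number. The plan is to exploit the fact that multiplication by $d$ is an automorphism of the group $\mathbb{Z}_n$ whenever $\gcd(d,n)=1$. Concretely, let $\varphi:\mathbb{Z}_n\to\mathbb{Z}_n$ be the map $\varphi(x)=dx$; since $d$ is a unit mod $n$, $\varphi$ is a bijection, and it is a group homomorphism, so in particular $\varphi(a+S)=\varphi(a)+\varphi(S)=da+S'$ for every $a\in\mathbb{Z}_n$. Thus $\varphi$ carries the family of translates of $S$ bijectively onto the family of translates of $S'$.

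The key step is then a transport-of-colorings argument. Given an $S$-polychromatic $k$-coloring $c:\mathbb{Z}_n\to\{1,\dots,k\}$, define a new coloring $c':\mathbb{Z}_n\to\{1,\dots,k\}$ by $c'(x)=c(\varphi^{-1}(x))=c(d^{-1}x)$, where $d^{-1}$ denotes the inverse of $d$ modulo $n$. For any translate $da+S'$ of $S'$, its preimage under $\varphi$ is the translate $a+S$ of $S$, so the multiset of colors $c'$ assigns to $da+S'$ equals the multiset of colors $c$ assigns to $a+S$, which contains all $k$ colors by hypothesis. Hence $c'$ is $S'$-polychromatic, giving $p_n(S')\ge p_n(S)$.

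For the reverse inequality, note that the hypothesis is symmetric in the following sense: since $\gcd(d,n)=1$, there is an inverse $e=d^{-1}$ with $\gcd(e,n)=1$ and $e<n$ (after reducing mod $n$), and $S = \{ea_1,\dots,ea_t\}$ up to reducing the coordinates mod $n$, i.e. $S$ is obtained from $S'$ by scaling by $e$. Applying the already-proven direction with $e$ in place of $d$ yields $p_n(S)\ge p_n(S')$, and the two inequalities give equality. I do not anticipate a genuine obstacle here; the only points requiring a little care are that multiplication by a unit is indeed a bijection that commutes with translation, and that one should phrase the reverse direction via the inverse unit rather than re-deriving it from scratch. (One should also note the hypotheses $d<n$ and $\gcd(d,n)=1$ are exactly what make $d$ a well-defined nonzero unit; the bound $d<n$ is cosmetic, since only the residue of $d$ matters.)
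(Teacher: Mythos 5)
Your proposal is correct and matches the paper's argument: both transport colorings through the bijection $x\mapsto dx$ (equivalently $x\mapsto d^{-1}x$), which carries translates of $S$ to translates of $S'$, and both obtain the reverse inequality from the invertibility of $d$ modulo $n$. The paper states this in two sentences; you have simply spelled out the same idea in more detail.
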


\begin{proof}
If $\chi'$ is $S'-$polychromatic, the coloring $\chi$ defined by $\chi(y) = \chi'(dy)$ is clearly $S-$polychromatic.  This argument can be reversed since $d$ is invertible in $\mathbb{Z}_n$.
\end{proof}

\begin{defn}\label{equivalentdef}
If $S = \{a_1, a_2,\ldots,a_t\} \subseteq  \mathbb{Z}_n$ and $S' = \{da_1+c, da_2+c,\ldots,da_t+c\}$, where $c,d \in \mathbb{Z}_n$ and $\gcd(d,n)=1$, then we say that $S$ and $S'$ are {\em equivalent} sets in $\mathbb{Z}_n$. 
\end{defn}

Thus, Lemma \ref{polyequiv2} says that equivalent sets in $\mathbb{Z}_n$ have the same polychromatic number.

\begin{lem}\label{ellsmall}
For all $b \in \mathbb{Z}_n$ with $3 \le n$ there exists $b' \in \mathbb{Z}_n$ so that $b' \leq \lceil \frac{n}{2} \rceil$ and $p(\{0,1,b\}) = p(\{0,1,b'\})$.
\end{lem}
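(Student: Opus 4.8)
The plan is to reduce $b$ by means of the equivalence relation of Definition~\ref{equivalentdef}. By Lemma~\ref{polyequiv2}, together with the obvious invariance of the polychromatic number under translation, equivalent sets in $\mathbb{Z}_n$ have the same polychromatic number; so it is enough to exhibit, for each $b$, a set equivalent to $\{0,1,b\}$ that again has the form $\{0,1,b'\}$ and satisfies $b' \le \lceil n/2 \rceil$.

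The only algebraic move I would need is the affine map $x \mapsto -x+1$, that is, $d = n-1$ and $c = 1$ in Definition~\ref{equivalentdef}; note $\gcd(n-1,n)=1$, so this is legitimate. It sends $\{0,1,b\}$ to $\{1,0,1-b\} = \{0,1,1-b\}$, and taking the representative of $1-b$ in $\{0,1,\dots,n-1\}$ this is $\{0,1,\,n+1-b\}$ whenever $2 \le b \le n-1$. Hence $\{0,1,b\}$ and $\{0,1,\,n+1-b\}$ are equivalent and so have equal polychromatic number.

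After this, the proof is a short case check on the representative $b \in \{0,1,\dots,n-1\}$. If $b \le \lceil n/2 \rceil$, take $b' = b$ and there is nothing to do. If $b \ge \lceil n/2 \rceil + 1$, take $b' = n+1-b$; then $2 \le b' = n+1-b \le n - \lceil n/2\rceil = \lfloor n/2\rfloor \le \lceil n/2\rceil$, so $b'$ is a genuine element of $\mathbb{Z}_n$ with $b'\le \lceil n/2\rceil$, and $p_n(\{0,1,b\}) = p_n(\{0,1,b'\})$ by the preceding paragraph. The degenerate values $b \in \{0,1\}$ are handled directly, since $n \ge 3$ forces $\lceil n/2\rceil \ge 2$, so these already satisfy $b \le \lceil n/2\rceil$.

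I do not expect a real obstacle: the argument is essentially bookkeeping around the reflection $b \mapsto n+1-b$. The only points demanding a little care are identifying the correct representative of $1-b$ modulo $n$ (hence the ``$+1$'' in $n+1-b$ rather than $n-b$), and verifying the inequality $n+1-b \le \lceil n/2\rceil$ in the reduction step, along with separating out the small cases $b \in \{0,1\}$.
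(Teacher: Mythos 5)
Your proof is correct and is essentially identical to the paper's: both apply the affine map $x \mapsto -x+1$ (i.e.\ $d=n-1$, $c=1$), identify the image as $\{0,1,n-b+1\}$, and check the bound $n+1-b \le \lfloor n/2\rfloor \le \lceil n/2\rceil$ when $b > \lceil n/2\rceil$. Your added bookkeeping on the small cases $b\in\{0,1\}$ is fine but not a substantive difference.
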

\begin{proof}
Since, $n-1$ is always relatively prime to $n$ for $3 \le n$, $p_n(S) = p_n(-S)$ for all $S \subseteq \mathbb{Z}_n$ by Lemma \ref{polyequiv2}.
If $\lceil \frac{n}{2} \rceil < b$, then let $b' = n - b + 1 \leq \frac{n}{2}$. 
Therefore, $p(\{0,1,b\}) = p(\{-1, 0, -b\}) = p( \{0,1,  - b + 1\}) = p(\{0,1, n - b + 1\})$. 
\end{proof}

\begin{prop}\label{reduce0abto01b}
 Let $S = \{0, a, b\} \subseteq \mathbb{Z}_n$ where $\gcd(a,b,n)=1$.  Then at least one of the following occurs.
 \vspace{-3mm}
 \begin{itemize}
\item[i.]  $S$ is equivalent to a set $S' = \{0, 1, b'\}$ where $b' \leq \lceil \frac{n}{2}\rceil$.
\vspace{-3mm}
\item[ii.]  $\gcd(a,n)\neq 1$, $\gcd(b,n)\neq 1$, $a\notin \langle b \rangle$ and $b \notin \langle a \rangle$.
\end{itemize}
\end{prop}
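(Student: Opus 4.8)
The plan is to do a straightforward case analysis on whether $a$ or $b$ (or some element of the coset structure) is invertible modulo $n$, and to reduce to the normal form $\{0,1,b'\}$ whenever any invertibility is available. Observe first that $S=\{0,a,b\}$ is equivalent (in the sense of Definition \ref{equivalentdef}) to the translated sets $\{0,b-a,-a\}$ and $\{0,-b,a-b\}$, obtained by subtracting $a$ or $b$ from every element. Thus, up to equivalence, the three ``differences'' available to us are $a$, $b$, and $b-a$ (and their negatives, which are also equivalent by Lemma \ref{polyequiv2} since $\gcd(n-1,n)=1$). So if \emph{any} of $a$, $b$, $b-a$ is a unit in $\mathbb{Z}_n$, we can scale by its inverse (Lemma \ref{polyequiv2}) to move that element to $1$, placing $S$ in the form $\{0,1,c\}$ for some $c$; then Lemma \ref{ellsmall} (applied as $p_n(\{0,1,c\}) = p_n(\{0,1,c'\})$ with $c' \le \lceil n/2\rceil$) delivers conclusion (i).

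It therefore remains to treat the case where none of $a$, $b$, $b-a$ is invertible, i.e. $\gcd(a,n)\neq 1$, $\gcd(b,n)\neq 1$, and $\gcd(b-a,n)\neq 1$. I must show conclusion (ii) holds, i.e. additionally $a\notin\langle b\rangle$ and $b\notin\langle a\rangle$. Suppose toward a contradiction that $a\in\langle b\rangle$, say $a = kb$ for some integer $k$. Combined with $\gcd(a,b,n)=1$, the subgroup $\langle b\rangle$ would then contain $a$ and $b$, hence $\gcd(b,n)$ divides $\gcd(a,b,n)=1$, forcing $\gcd(b,n)=1$ — contradicting non-invertibility of $b$. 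The symmetric argument rules out $b\in\langle a\rangle$. (One should double-check the precise relationship: $a\in\langle b\rangle$ means $\langle a\rangle\subseteq\langle b\rangle$, so $\langle a,b\rangle=\langle b\rangle=\langle\gcd(b,n)\rangle$; since $\langle a,b\rangle=\langle\gcd(a,b,n)\rangle=\mathbb{Z}_n$ by hypothesis, we get $\gcd(b,n)=1$.) This establishes (ii) and completes the dichotomy.

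The one place that needs a little care, rather than being purely routine, is making sure the list of ``differences up to equivalence'' is exactly $\{\pm a,\pm b,\pm(b-a)\}$ and that exhausting the non-unit case for all three of these is genuinely the complement of case (i); in other words, one must verify that if some translate-and-scale of $S$ has the form $\{0,1,c\}$ then the scaling factor used must be (a unit times) one of $a^{-1}, b^{-1}, (b-a)^{-1}$, so that the failure of (i) really does force all three gcd conditions. This is immediate once we note that a set $\{0,x,y\}$ is equivalent to a set containing $1$ precisely when one of $x$, $y$, $y-x$ is a unit: the element sent to $1$ is $d$ times one of the pairwise differences of $\{0,x,y\}$, and the pairwise differences are exactly $\pm x,\pm y,\pm(x-y)$. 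So the argument is complete, with the bookkeeping of equivalence classes being the only mild subtlety.
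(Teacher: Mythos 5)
Your proposal is correct and follows essentially the same route as the paper: scale by the inverse of an invertible element to reach $\{0,1,c\}$ and invoke Lemma \ref{ellsmall} for case (i), and when both $\gcd(a,n)$ and $\gcd(b,n)$ exceed $1$, derive $a\notin\langle b\rangle$ and $b\notin\langle a\rangle$ from $\gcd(a,b,n)=1$ exactly as the paper does. Your extra branch using $b-a$ as a possible unit (and the closing worry about characterizing the exact complement of case (i)) is harmless but unnecessary, since the proposition only asserts that at least one case occurs, and case (ii) already holds whenever neither $a$ nor $b$ is a unit --- a point the paper itself makes in the remark following the proposition.
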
 

\begin{proof}
 If $\gcd(a,n)=1$ then $a$ is invertible in $\mathbb{Z}_n$, so $S$ is equivalent to a set $\{0,1,c\}$, for some $c$ ($d=a^{-1}$ in Definition \ref{equivalentdef}), and then to $S'$ by Lemma \ref{ellsmall}.  Similarly if $\gcd(b,n)=1$.  Now suppose neither $\gcd(a,n)$ nor $\gcd(b,n)$ is equal to 1.  If $b$ is a multiple of $a$ then, since $\gcd(a,b,n)=1$, $\gcd(a,n)$ must equal 1, a contradiction, so $b$ is not a multiple of $a$.  Similarly, $a$ is not a multiple of $b$.
\end{proof}

We remark that if Case $ii$ occurs and $\gcd(b-a, n)=1$, then Case $i$ also occurs. However, in our proof we just need that at least one of them occurs.  We will treat Case $i$ in Section 5 and Case $ii$ in Section 6. The following theorem is the main result of this paper.

\begin{thm}\label{reducedsubsetsize3}
Let $S=\{0, a, b\} \subseteq \mathbb{Z}_n$ and $\gcd(a,b,n) = 1$, then
\[p_{n}(S) = \begin{cases}
3 &\text{ if } 3|n \text{ and }a \text{ and }b \text{ are in different nonzero}\bmod 3 \text{ congruence classes }\\
1 &\text{ if } n=7 \text{ and } \{0,a,b\} \text{ is equivalent to }\{0,1,3\}\\
2 &\text{ otherwise}.
\end{cases}\]
\end{thm}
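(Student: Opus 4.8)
The plan is to split along the dichotomy of Proposition~\ref{reduce0abto01b}. First I would dispose of the upper bound $p_n(S)\le 3$ in general and the value $3$ in the divisibility case. If $3\mid n$ and $a,b$ lie in different nonzero residue classes mod $3$, the periodic coloring $RBYRBY\cdots$ of Example~\ref{expolynum3} shows $p_n(S)\ge 3$; conversely, any translate of $S$ has only three elements, so $p_n(S)\le 3$ always, and I would check that $p_n(S)=3$ \emph{forces} a coloring in which each residue class mod the relevant subgroup is monochromatic, which (after reducing by Lemmas~\ref{polyequiv}--\ref{polyequiv2}) can only happen when $3\mid n$ and $a\not\equiv b\pmod 3$ with both nonzero. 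This pins down the first line of the theorem and reduces everything else to showing $p_n(S)=2$ except in the Fano case.

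Next, for the lower bound $p_n(S)\ge 2$ in all remaining cases: I need a proper $2$-coloring of $\mathbb{Z}_n$ in which no translate of $S=\{0,a,b\}$ is monochromatic, i.e.\ a set $C\subseteq\mathbb{Z}_n$ that is a blocking set for $S$ and whose complement is also a blocking set for $S$. Equivalently, I want a set $C$ such that neither $C$ nor $\mathbb{Z}_n\setminus C$ contains a translate of $S$. In Case~$i$ (after replacing $S$ by an equivalent $\{0,1,b'\}$ with $1\le b'\le\lceil n/2\rceil$, using Lemma~\ref{ellsmall}), I would construct such a coloring explicitly, presumably by a block pattern of the form (a run of reds)(a run of blues) repeated, with block lengths chosen in terms of $b'$ and $n$; the constraint ``no monochromatic translate of $\{0,1,b'\}$'' becomes a gcd/length condition that can be satisfied for all $n,b'$ except $n=7,b'=3$. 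This is where Section~5 does its work. In Case~$ii$ ($\gcd(a,n)\ne1$, $\gcd(b,n)\ne1$, $a\notin\langle b\rangle$, $b\notin\langle a\rangle$) I would instead exploit the subgroup structure: since $a$ and $b$ generate proper subgroups, one can color using cosets of $\langle a\rangle$ or of $\langle b\rangle$ (or of $\langle \gcd(a,n)\rangle$) so that the translates of $S$ are forced to meet two classes; this is Section~6.

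Finally, the Fano exception: when $n=7$ and $S$ is equivalent to $\{0,1,3\}$, Example~\ref{exfanoplane} already gives $p_7(S)=1$ via the fact that the Fano plane has no proper $2$-coloring of its points avoiding a monochromatic line, so here I only need to remark that no smaller-$n$ reduction applies (Lemma~\ref{polyequiv}'s quotient argument cannot lower a prime $n$), and that all sets $\{0,a,b\}$ with $\gcd(a,b,7)=1$ that are \emph{not} equivalent to $\{0,1,3\}$ fall under Case~$i$ with $n=7$ and admit a polychromatic $2$-coloring.

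I expect the main obstacle to be the lower bound in Case~$i$: exhibiting, for every pair $(n,b')$ with $b'\le\lceil n/2\rceil$ and $(n,b')\ne(7,3)$, an explicit $2$-coloring with no monochromatic translate of $\{0,1,b'\}$, and verifying the exception is genuinely unavoidable precisely at $(7,3)$. The combinatorics of which block-length patterns survive all seven cyclic shifts is delicate, and ruling out every alternative coloring at $n=7$, $b'=3$ (rather than invoking the slick Fano argument, which must be shown to be equivalent) is the crux. Case~$ii$ should be comparatively soft, since the hypotheses there hand us a usable subgroup structure for free.
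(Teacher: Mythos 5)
Your overall architecture matches the paper's: reduce via the dichotomy of Proposition~\ref{reduce0abto01b}, handle the value $3$ separately, give explicit $2$-colorings in Case~$i$ (with the $(7,3)$ exception isolated via the Fano plane) and in Case~$ii$. But there are two places where what you sketch would not go through as stated. First, the ``only if'' half of the $p_n(S)=3$ characterization: you assert that $p_n(S)=3$ \emph{forces} a coloring that is constant on residue classes of some subgroup, and offer no mechanism for this. That claim is not obviously true and is not how the paper argues. The paper instead invokes the tiling characterization ($p_G(S)=|S|$ if and only if $S$ tiles $G$, Theorem~\ref{polyifftiles}), then proves that any complement set $T$ of $\{0,a,b\}$ is a union of cosets of $\langle a+b\rangle$ (Lemma~\ref{xinTthenxabinT}: if $x\in T$ then $x+a+b\in T$, since otherwise $x+a$ or $x+b$ would be doubly covered). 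From $|T|=n/3$ and $|T|$ being a multiple of $n/\gcd(a+b,n)$ one gets $3\mid a+b$, hence the mod-$3$ condition. Without the tiling detour, or something equivalent, your first step has a genuine hole.

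Second, you misjudge Case~$ii$ as ``comparatively soft.'' Coloring by cosets of $\langle a\rangle$ means the translate $\{x,x+a,x+b\}$ is bichromatic exactly when the cosets of $x$ and $x+b$ differ in color; the cosets form a copy of $\mathbb{Z}_s$ with $s=\gcd(a,n)$, and adding $b$ shifts by $b\bmod s$, so you would need a $2$-coloring of $\mathbb{Z}_s$ in which every $\{c,c+b\}$ is bichromatic. By Proposition~\ref{subsetsize2} this requires $|\langle b\bmod s\rangle|$ to be even, which fails whenever $n$ is odd (e.g.\ the paper's example $n=105$, $a=18$, $b=25$, where $s=3$). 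The same obstruction kills the coset-of-$\langle b\rangle$ version. The paper's actual device is two-dimensional: arrange $\mathbb{Z}_n$ as the matrix $m_{ij}=ai+bj$ so that translates of $S$ become ``ell-tiles,'' and $2$-color an $s\times t$ torus so every ell-tile is bichromatic (Lemma~\ref{matrixcoloring}, where the odd-by-odd case needs a corner correction to the checkerboard), then check that this descends to a well-defined coloring of $\mathbb{Z}_n$ via the block structure of $M$. That construction, not a one-dimensional coset coloring, is the missing idea in your Case~$ii$. Your Case~$i$ plan (block colorings with run lengths tied to $b'$ and $n$) and your treatment of the Fano exception are consistent with what the paper does.
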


If we do not make the assumption that $\gcd(a,b,n)=1$, then we get the following theorem, which is clearly equivalent to Theorem \ref{reducedsubsetsize3}:

\begin{thm}\label{subsetsize3}
If $3 \le n$, $a,b \in \mathbb{Z}_n$, and $a\neq b$, then
\[p_{n}(\{0,a,b\}) = \begin{cases}
3 &\text{ if } n \equiv 0 \bmod 3^{j+1}, a = 3^j m_a, b = 3^j m_b,\\
   & m_a,m_b \not\equiv 0 \bmod 3,\text{and } m_a + m_b \equiv 0 \bmod 3\\
1 &\text{ if } n \equiv 0 \bmod 7, |\langle a \rangle| = 7, \text{and } b=3a \text{ or } 5a\\
2 &\text{ otherwise}.
\end{cases}\]
\end{thm}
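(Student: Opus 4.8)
The plan is to derive Theorem~\ref{subsetsize3} from Theorem~\ref{reducedsubsetsize3} by clearing the common divisor and then translating the three cases; since the two statements are called ``clearly equivalent,'' the content is entirely in matching up the case conditions. We may assume $0,a,b$ are distinct, since otherwise $|S|\le 2$. Set $d=\gcd(a,b,n)$ and, choosing each element's representative in $\{0,\dots,n-1\}$ (so $d\mid a$ and $d\mid b$ as integers), write $n=dn'$, $a=da'$, $b=db'$; then $\gcd(a',b',n')=1$ and $\{0,a,b\}=d\{0,a',b'\}$ as a subset of $\mathbb{Z}_n$. Lemma~\ref{polyequiv} gives $p_n(\{0,a,b\})=p_{dn'}(d\{0,a',b'\})=p_{n'}(\{0,a',b'\})$, and the right side is computed by Theorem~\ref{reducedsubsetsize3} applied to $(a',b',n')$. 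So it suffices to check that each case condition of Theorem~\ref{reducedsubsetsize3} for $(a',b',n')$ matches the corresponding case condition of Theorem~\ref{subsetsize3} for $(a,b,n)$. I will use the elementary fact that for an integer $r$, $b'=ra'$ in $\mathbb{Z}_{n'}$ iff $b=ra$ in $\mathbb{Z}_n$ (both assert $n\mid b-ra$, using $d\mid a,b$), and likewise $a'=0$ in $\mathbb{Z}_{n'}$ iff $a=0$ in $\mathbb{Z}_n$.

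For the value $3$: the first branch of Theorem~\ref{reducedsubsetsize3} says $3\mid n'$ and $a',b'$ lie in distinct nonzero classes mod $3$, i.e.\ $3\nmid a'$, $3\nmid b'$, and $a'+b'\equiv 0\pmod 3$. Writing $v_3(x)$ for the exponent of the highest power of $3$ dividing $x$, we have $v_3(d)=\min\{v_3(a),v_3(b),v_3(n)\}$, so $3\nmid a'$ and $3\nmid b'$ hold together exactly when $v_3(a)=v_3(b)=:j$ with $j\le v_3(n)$; then $v_3(n')=v_3(n)-j$, so $3\mid n'$ becomes $v_3(n)\ge j+1$. Writing $a=3^jm_a$, $b=3^jm_b$, these are precisely ``$n\equiv 0\pmod{3^{j+1}}$ and $m_a,m_b\not\equiv 0\pmod 3$.'' Finally, with $d=3^je$, $3\nmid e$, one has $a'e=m_a$ and $b'e=m_b$ as integers, hence $a'+b'\equiv e^{-1}(m_a+m_b)\pmod 3$, and since $e$ is a unit mod $3$ this yields $a'+b'\equiv 0\iff m_a+m_b\equiv 0\pmod 3$. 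So the first branch of Theorem~\ref{reducedsubsetsize3} for $(a',b',n')$ is equivalent to the first branch of Theorem~\ref{subsetsize3} for $(a,b,n)$.

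For the value $1$: the second branch of Theorem~\ref{reducedsubsetsize3} says $n'=7$ and $\{0,a',b'\}$ is equivalent to $\{0,1,3\}$ in $\mathbb{Z}_7$; since equivalence preserves cardinality this already forces $0,a',b'$ distinct, so $a',b'\neq 0$. I would first record the finite check that for distinct nonzero $a',b'\in\mathbb{Z}_7$, $\{0,a',b'\}\sim\{0,1,3\}$ exactly when $b'(a')^{-1}\in\{3,5\}$, that is, $b'=3a'$ or $b'=5a'$ (symmetric in $a',b'$ since $3\cdot 5\equiv 1$), obtained by listing the affine orbit of $\{0,1,3\}$, or by checking that among the sets $\{0,1,c\}$ with $c\in\{2,\dots,6\}$ only $c=3,5$ lie in it. Next, $n'=7$ means $7\mid n$ and $d=n/7$, so $\gcd(a,n)=d\gcd(a',7)$ equals $d=n/7$ once $a'\neq 0$, i.e.\ $|\langle a\rangle|=7$; conversely $|\langle a\rangle|=7$ forces $(n/7)\mid a$ with the quotient $a/(n/7)$ coprime to $7$, and together with $b\in\{3a,5a\}$ this forces $\gcd(a,b,n)=n/7$, hence $n'=7$ and $a'\neq 0$. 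Combined with ``$b'=ra'$ iff $b=ra$,'' the conditions ``$|\langle a\rangle|=7$ and $b=3a$ or $5a$'' thus match ``$n'=7$ and $\{0,a',b'\}\sim\{0,1,3\}$.'' The ``otherwise'' cases then match automatically: $p_n(\{0,a,b\})=p_{n'}(\{0,a',b'\})\in\{1,2,3\}$, the conditions for the values $3$ and $1$ correspond as shown and are mutually exclusive (the first needs $3\mid n'$, the second $n'=7$), so the common value is $2$ in exactly the remaining cases.

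The main obstacle is not conceptual but making the arithmetic translation airtight: in the value-$3$ case, confirming that ``$m_a,m_b\not\equiv 0$, $m_a+m_b\equiv 0\pmod 3$'' both forces and is forced by the reduced-theorem condition, independently of the prime-to-$3$ part $e$ of $d$; and, more minor, pinning down exactly which $3$-element subsets of $\mathbb{Z}_7$ are equivalent to $\{0,1,3\}$. Both are finite, self-contained computations, so once they are in place the equivalence of the two theorems follows.
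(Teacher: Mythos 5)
Your proposal is correct and follows exactly the route the paper intends: the paper offers no written proof, simply asserting that Theorem \ref{subsetsize3} is ``clearly equivalent'' to Theorem \ref{reducedsubsetsize3}, and you have supplied precisely the omitted bookkeeping (dividing out $d=\gcd(a,b,n)$ via Lemma \ref{polyequiv} and matching the case conditions, including the correct classification of the sets $\{0,a',b'\}\subseteq\mathbb{Z}_7$ equivalent to $\{0,1,3\}$ as those with $b'=3a'$ or $5a'$). Your explicit assumption that $0,a,b$ are distinct is the intended reading and is needed, since the statement as literally written would fail when $a=0$ or $b=0$.
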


\section{Sets of size 2}

For the following proposition we assume without loss of generality that $0$ is in the chosen subset of $\mathbb{Z}_n$.
\begin{prop}\label{subsetsize2}
If  $S = \{0,b\} \subseteq \mathbb{Z}_n$ where $\gcd(b,n) = 1$ then
\[p_{n}(S) = \begin{cases}
1 & \text{ if }|\la b \ra| \text{ is odd}\\
2 & \text{ if }|\la b \ra| \text{ is even}.
\end{cases}\]
\end{prop}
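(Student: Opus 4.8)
The plan is to reduce everything to the set $\{0,1\}$ and then recognize the problem as proper $2$-colorability of a cycle. First, since $\gcd(b,n)=1$, the element $b$ generates $\mathbb{Z}_n$, so $\langle b\rangle=\mathbb{Z}_n$ and $|\langle b\rangle|=n$; thus the statement to be proved is simply that $p_n(\{0,b\})=2$ when $n$ is even and $p_n(\{0,b\})=1$ when $n$ is odd. Also, any translate of $S$ has exactly $|S|=2$ elements, so it can contain at most two colors; hence $p_n(S)\le 2$ for every such $S$, and since the trivial $1$-coloring is always $S$-polychromatic, we have $p_n(S)\in\{1,2\}$. It therefore suffices to decide when a $2$-coloring of $\mathbb{Z}_n$ can be $\{0,b\}$-polychromatic.

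Next, observe that a $2$-coloring $\chi$ of $\mathbb{Z}_n$ is $\{0,b\}$-polychromatic precisely when every translate $\{x,x+b\}$ receives both colors, i.e.\ when $\chi(x)\neq\chi(x+b)$ for all $x\in\mathbb{Z}_n$. Equivalently, $\chi$ is a proper $2$-coloring of the Cayley graph on $\mathbb{Z}_n$ with connection set $\{b,-b\}$. Since $\gcd(b,n)=1$, the walk $0,b,2b,\dots,(n-1)b$ visits every element of $\mathbb{Z}_n$ exactly once before returning to $0$, so this graph is a single cycle of length $n$. A cycle of even length is properly $2$-colorable (color $ib$ by the parity of $i$), which gives $p_n(\{0,b\})=2$ when $n$ is even; one may equally cite Lemma \ref{polyequiv2} with $d=b^{-1}$ to transfer the coloring $\chi(x)=x\bmod 2$ from $\{0,1\}$. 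When $n$ is odd, suppose for contradiction that such a $\chi$ existed; applying $\chi(x)\neq\chi(x+b)$ successively at $x=0,b,2b,\dots,(n-1)b$ forces $\chi(0)\neq\chi(b)\neq\chi(2b)\neq\cdots\neq\chi((n-1)b)\neq\chi(nb)=\chi(0)$, an odd-length alternating chain closing on itself, which is impossible. Hence $p_n(\{0,b\})=1$ when $n$ is odd.

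There is no serious obstacle here: the argument is essentially the standard fact that the cycle $C_n$ is bipartite if and only if $n$ is even, dressed up in the polychromatic language. The only points needing a line of care are (a) recording the general bound $p_n(S)\le|S|$, which pins the answer to the set $\{1,2\}$, and (b) noting that $\gcd(b,n)=1$ makes the orbit $\{ib:0\le i<n\}$ equal to all of $\mathbb{Z}_n$, so that the single chain of inequalities above really does close up after exactly $n$ steps. Phrasing the dichotomy in terms of $|\langle b\rangle|$ rather than $n$ costs nothing here but is the form that will combine cleanly with Lemma \ref{polyequiv} when the hypothesis $\gcd(b,n)=1$ is later dropped.
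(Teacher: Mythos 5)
Your proof is correct and is essentially the same argument the paper gives (the paper's version is a one-line ``clearly''): a $2$-coloring is $\{0,b\}$-polychromatic exactly when colors alternate along the cycle $0,b,2b,\dots$, which covers all of $\mathbb{Z}_n$ since $\gcd(b,n)=1$, and such an alternation exists if and only if the cycle length $|\la b \ra|=n$ is even. Your write-up just makes the upper bound $p_n(S)\le |S|$ and the odd-cycle obstruction explicit.
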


\begin{proof}

Clearly there will be an $S-$polychromatic $2$-coloring of the multiples of $b$ if and only if  $|\la b \ra|$ is even.
\end{proof}

\section{Sets that tile}

 Given a set $S \subseteq G$ where $G$ is an abelian group,  a set $T \subseteq G$ is a {\em complement set} for $S$ if $S + T = G$. $S$ \emph{tiles} $G$ by translation if $T$ is a complement set for $S$ and if $s_1, s_2 \in S$, $t_1, t_2 \in T$, and $s_1 + t_1 = s_2 + t_2$ implies $s_1 = s_2$ and $t_1 = t_2$. The notation $S \oplus T$ is used when $S$ tiles $G$ by translation. Without loss of generality, $0 \in S, T$ for all of the following arguments.

Newman~\cite{New77} proved necessary and sufficient conditions for a finite set $S$ to tile $\mathbb{Z}$ if $|S|$ is a power of a prime. 

\begin{thm}\cite{New77}\label{newtile77}
Let $S=\{s_1, \ldots, s_k\}$ be distinct integers with $|S|=p^\alpha$ where $p$ is prime and $\alpha$ is a positive integer.  For $1\le i<j\le k$ let $p^{e_{ij}}$ be the highest power of $p$ that divides $s_i-s_j$.  Then $S$ tiles $\mathbb{Z}$ if and only if $|\{e_{ij}: 1 \le i <j \le k\}| \le \alpha$.
\end{thm}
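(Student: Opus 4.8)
The plan is first to recast the condition. Write $E=\{e_{ij}:1\le i<j\le k\}$. I claim $|E|\ge\alpha$ for \emph{every} $S$ with $|S|=p^{\alpha}$: partition $S$ by residue modulo $p^{r}$ and note that, as $r$ runs from $0$ upward, the number of classes runs from $1$ to $p^{\alpha}$, and at the step $r\to r+1$ it strictly increases --- by a factor of at most $p$ --- exactly when $r$ equals some $e_{ij}$. Hence the hypothesis $|E|\le\alpha$ is equivalent to $|E|=\alpha$, and the theorem becomes: $S$ tiles $\mathbb{Z}$ if and only if $|E|=\alpha$.

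For the ``if'' direction, write $E=\{f_{1}<\dots<f_{\alpha}\}$ and set $m=f_{\alpha}+1$. Each nonzero pairwise difference in $S$ has $p$-adic valuation at most $f_{\alpha}<m$, so $S$ injects into $\mathbb{Z}_{p^{m}}$ and we may take $S\subseteq[0,p^{m})$. Let $T_{0}$ be the set of $y\in[0,p^{m})$ whose base-$p$ digit is $0$ in every position lying in $E$; then $|T_{0}|=p^{m-\alpha}$, so $|S|\cdot|T_{0}|=p^{m}$. The map $S\times T_{0}\to\mathbb{Z}_{p^{m}}$, $(s,t)\mapsto s+t$, is injective: if $s+t\equiv s'+t'\pmod{p^{m}}$ then $s-s'\equiv t'-t\pmod{p^{m}}$, but a nonzero difference inside $S$ has valuation in $E$ whereas a nonzero difference inside $T_{0}$ has valuation equal to its lowest differing digit position, which by construction lies outside $E$; since all these valuations are $<m$, the two sides cannot both be nonzero, and one concludes $s=s'$ and $t=t'$. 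An injection between sets of equal size $p^{m}$ is a bijection, so $S\oplus T_{0}=\mathbb{Z}_{p^{m}}$ and therefore $S\oplus(T_{0}+p^{m}\mathbb{Z})=\mathbb{Z}$.

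For the ``only if'' direction, suppose $S$ tiles $\mathbb{Z}$. A finite tiling of $\mathbb{Z}$ is periodic, so $S\oplus T_{0}=\mathbb{Z}_{N}$ with $p^{\alpha}\mid N$, and by passing to a multiple we may assume $v_{p}(N)$ is as large as we wish; since $|S|$ is a power of the single prime $p$, a standard reduction (see \cite{New77}) then lets us assume $N=p^{\beta}$ with $\beta\ge f_{\alpha}+1$. I would then prove, by induction on $\beta$, that any tiling $S\oplus T=\mathbb{Z}_{p^{\beta}}$ satisfies $|E(S)|+|E(T)|\le\beta$, where $E(T)$ denotes the analogous set of valuations for $T$; combined with $|E(S)|\ge\alpha$ and $|E(T)|\ge\beta-\alpha$ (the inequality of the first paragraph applied to the prime-power sets $S$ and $T$), this forces $|E(S)|=\alpha$. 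In the inductive step, if one of $S,T$ is a singleton or equals $\mathbb{Z}_{p^{\beta}}$ the bound is immediate; otherwise $\mathbb{Z}_{p^{\beta}}$ is a Hajós group, so one of $S,T$ --- say $T$ --- is a union of cosets of the order-$p$ subgroup $\langle p^{\beta-1}\rangle$. Writing $T=\widetilde{T}\oplus\langle p^{\beta-1}\rangle$, one sees that $S\oplus\widetilde{T}$ is a transversal of $\langle p^{\beta-1}\rangle$, so passing to the quotient $\mathbb{Z}_{p^{\beta}}/\langle p^{\beta-1}\rangle\cong\mathbb{Z}_{p^{\beta-1}}$ yields a tiling of $\mathbb{Z}_{p^{\beta-1}}$ by the (injective) images of $S$ and $\widetilde{T}$; the quotient map preserves $E(S)$ and $E(\widetilde{T})$, while $E(T)=E(\widetilde{T})\cup\{\beta-1\}$, so the inductive bound $|E(S)|+|E(\widetilde{T})|\le\beta-1$ yields $|E(S)|+|E(T)|\le\beta$. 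The case where $S$ (rather than $T$) is periodic is symmetric.

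I expect the genuine difficulties to lie entirely in the ``only if'' half: the reduction from a tiling of $\mathbb{Z}$ to a tiling of $\mathbb{Z}_{p^{\beta}}$ --- which is where the prime-power hypothesis on $|S|$ is essential and which is not elementary --- and the use of the Hajós property of $\mathbb{Z}_{p^{\beta}}$, namely that in every factorization one of the two factors is periodic. These two inputs are the engine of the proof; the reformulation $|E|=\alpha$, the explicit complement $T_{0}$ in the ``if'' direction, and the valuation bookkeeping (in particular $E(T)=E(\widetilde{T})\cup\{\beta-1\}$) are routine.
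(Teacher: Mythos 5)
First, note that the paper does not prove this statement at all: it is quoted from Newman \cite{New77}, and the only piece of it the paper actually uses (the case $|S|=3$) is rederived independently via Proposition \ref{P3iffcond3} and Theorem \ref{polyifftiles}. So your proposal has to stand on its own. Your opening observation that $|E|\ge\alpha$ holds for every set of size $p^{\alpha}$ (counting how often the residue classes mod $p^{r}$ split) is correct, and your ``if'' direction --- the digit-restricted complement $T_{0}$ and the valuation argument for injectivity of $(s,t)\mapsto s+t$ --- is complete and correct; this is essentially the standard sufficiency construction.

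The ``only if'' direction, however, has a genuine gap at exactly the step you wave at. The claim that ``a standard reduction (see \cite{New77}) lets us assume $N=p^{\beta}$'' cannot be cited from Newman: his paper contains no such reduction as a separable lemma. His necessity proof works directly with the generating function $S(x)=\sum x^{s_i}$ and the cyclotomic divisors of $x^{N}-1$, using $S(1)=p^{\alpha}$ to rule out $\Phi_{q^{j}}\mid S(x)$ for primes $q\neq p$. The statement you actually need --- that a set of size $p^{\alpha}$ tiling $\mathbb{Z}_{N}$ also tiles a cyclic $p$-group --- is true, but it is comparable in depth to the theorem itself (it amounts to Tijdeman's dilation theorem for prime-power sets, or to the Coven--Meyerowitz analysis of which prime-power cyclotomics divide $S(x)$), and once that machinery is in hand the detour through the Haj\'{o}s property of $\mathbb{Z}_{p^{\beta}}$ becomes superfluous. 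That Haj\'{o}s/de Bruijn input --- every factorization of $\mathbb{Z}_{p^{\beta}}$ has a periodic factor --- is itself a nontrivial theorem you import wholesale. Your induction on $\beta$, the identity $|E(T)|=|E(\widetilde{T})|+1$, and the bound $|E(S)|+|E(T)|\le\beta$ are all correct as internal logic, but as written the necessity half is an outline resting on two unproved (and in one case misattributed) pillars rather than a proof.
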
 

The characterization of sets $S$ of size $3$ such that $p_n(S)=3$ (Theorem \ref{reducedsubsetsize3} and Proposition \ref{P3iffcond3}) follows immediately from Newman's theorem (Theorem \ref{newtile77}). When commenting on this theorem in \cite{New77} Newman says: ``Surely the special case [when $|S|=3$] deserves to have a completely trivial proof - but we have not been able to find one."

If there is an $S-$polychromatic $k-$coloring of $\mathbb{Z}_n$, then clearly there is an $S-$polychromatic $k-$coloring of $\mathbb{Z}$ with period $n$.  If there is an $S-$polychromatic $k-$coloring of $\mathbb{Z}$ for a finite set $S$, then there is an $S-$polychromatic $k-$coloring of $\mathbb{Z}_n$ for some $n$.  To see this, let $d$ equal the largest difference between two elements in $S$.  If $\chi$ is an $S-$polychromatic $k-$coloring of $\mathbb{Z}$, there are only $k^{(d+1)}$ possibilities for the coloring on $d+1$ consecutive integers, so two such strings must be identical.  If $n$ is the difference between the first integers in these two strings, then we can ``wrap around" the coloring $\chi$ to get an $S-$polychromatic $k-$coloring of $\mathbb{Z}_n$.

Suppose $S=\{0,a,b\}$ and $\chi$ is an $S-$polychromatic $3-$coloring of $\mathbb{Z}$.  By the above remark there exists an $S-$polychromatic $3-$coloring of $\mathbb{Z}_n$ for some $n$.  By Proposition \ref{P3iffcond3}, $a$ and $b$ are in different nonzero $\bmod \hspace{1mm}3$ congruence classes, which fulfills Newman's wish to have a simple proof of his theorem for the special case when $|S|=3$.

Later Coven and Meyerowitz~\cite{CM99} gave necessary and sufficient conditions for $S$ to tile $\mathbb{Z}$ when $|S|= p_1^{\alpha_1}p_2^{\alpha_2}$, where $p_1$ and $p_2$ are primes. The following characterization of tiling by translation in an abelian group was obtained in \cite{Axe2019}.

\begin{thm}\cite{Axe2019} \label{polyifftiles}
Let $G$ be an abelian group and $S$ a finite subset of $G$. $S$ tiles $G$ by translation if and only if $p(S) = |S|$. Moreover, if $\chi$ is an $S-$polychromatic coloring of $G$ with $|S|$ colors and $T$ is a color class of $\chi$, then $S \oplus T = G$.
\end{thm}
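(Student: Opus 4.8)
The plan is to prove the two implications directly from the definitions, the only real input being the uniqueness clause in a tiling $S\oplus T=G$. Throughout we use the trivial upper bound $p_G(S)\le|S|$: a single translate $a+S$ has exactly $|S|$ elements and so cannot meet more than $|S|$ color classes. Thus it suffices to show (a) if $S\oplus T=G$ then there is an $S$-polychromatic $|S|$-coloring, and (b) if there is an $S$-polychromatic $|S|$-coloring then $S$ tiles $G$, together with the identification of the color classes.

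For (a), suppose $S\oplus T=G$, so that every $g\in G$ has a \emph{unique} expression $g=s+t$ with $s\in S$, $t\in T$. Index the colors by the elements of $S$ and let $\chi(g)$ be this unique $s$; the color class of $s$ is then $s+T$. These classes are nonempty (as $T\neq\emptyset$) and pairwise disjoint (if $g\in(s_1+T)\cap(s_2+T)$ then $g$ would have two $S+T$-representations, forcing $s_1=s_2$), so $\chi$ uses exactly $|S|$ colors. To see $\chi$ is $S$-polychromatic it is enough to show that for every $a\in G$ the map $s\mapsto\chi(a+s)$ is injective on $S$, hence a bijection onto $S$ since both sets have size $|S|$. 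If $\chi(a+s_1)=\chi(a+s_2)=s$, write $a+s_1=s+t_1$ and $a+s_2=s+t_2$ with $t_1,t_2\in T$; subtracting gives $s_1+t_2=s_2+t_1$, and as both sides are $S+T$-representations of the same element, uniqueness forces $s_1=s_2$. Hence every translate of $S$ is rainbow, so $p_G(S)\ge|S|$, and with the trivial bound $p_G(S)=|S|$.

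For (b) and the moreover statement, let $\chi$ be an $S$-polychromatic coloring with $|S|$ colors and let $T$ be the color class of a fixed color $c$. Since each translate $a+S$ has $|S|$ elements and receives all $|S|$ colors, it contains \emph{exactly one} element colored $c$; equivalently, for every $a\in G$ there is a unique $s\in S$ with $a+s\in T$. Taking $0\in S$ without loss of generality and applying this to the translate $g+S$ for an arbitrary $g\in G$, we get a unique $s\in S$ with $g+s\in T$, so $g=(g+s)-s$ with $g+s\in T$ and $-s\in-S$, the uniqueness of $s$ making this representation unique. Thus $(-S)\oplus T=G$; equivalently $S\oplus(-T)=G$ by negating everything, which in particular shows $S$ tiles $G$. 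This is the ``moreover'' conclusion after accounting for the passage between a color class (a blocking set for $S$) and its negative (a complement set for $S$) noted in the introduction.

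There is no deep obstacle here; the argument is essentially bookkeeping around the uniqueness clause in the definition of $\oplus$. The two points needing care are: in (a), that the constructed $\chi$ really is polychromatic — this is exactly where tiling-uniqueness is used, and without it some translate could repeat a color; and in (b), correctly tracking signs, i.e.\ whether it is $T$ or $-T$ (equivalently $S$ or $-S$) that appears in the resulting tiling. Finally, the pigeonhole step ``all $|S|$ colors on $|S|$ elements $\Rightarrow$ one of each'' uses nothing about $G$ being finite, so the proof applies verbatim to $\mathbb{Z}$ and to every abelian group.
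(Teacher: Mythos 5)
The paper itself gives no proof of this theorem --- it is quoted from \cite{Axe2019} --- so your argument can only be measured against the standard one. Your proof of the equivalence ``$S$ tiles $G$ $\Leftrightarrow$ $p_G(S)=|S|$'' is correct and complete: in (a) the coloring $\chi(s+t)=s$ and the injectivity check via uniqueness of representations is exactly right, and in (b) you correctly extract, from the fact that each translate $a+S$ meets the color class $T$ in exactly one point, the tiling $(-S)\oplus T=G$, equivalently $S\oplus(-T)=G$, which does certify that $S$ tiles $G$.

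The gap is in the ``moreover'' clause. The theorem asserts $S\oplus T=G$ for the color class $T$ itself, whereas you have produced $S\oplus(-T)=G$; your closing remark that this becomes the stated conclusion ``after accounting for the passage between a color class and its negative'' papers over a genuine step, since $A\oplus B=G$ does not formally imply $(-A)\oplus B=G$ (it is true, but by a nontrivial argument, e.g.\ a character computation for finite $G$). The clean fix is to prove $S\oplus T=G$ directly. Uniqueness: if $g=s_1+t_1=s_2+t_2$ with $t_1,t_2\in T$ and $s_1\neq s_2$ (hence $t_1\neq t_2$), then the translate $(g-s_1-s_2)+S$ contains $(g-s_1-s_2)+s_1=g-s_2=t_2$ and $(g-s_1-s_2)+s_2=g-s_1=t_1$, i.e.\ two elements of the color class $T$, contradicting that every translate gets all $|S|$ colors on only $|S|$ elements. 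Covering, for finite $G$ (the only case this paper uses): counting incidences --- each of the $|G|$ translates meets $T$ exactly once, and each $t\in T$ lies in exactly $|S|$ translates --- gives $|T|=|G|/|S|$, and then the pairwise disjointness of the sets $s+T$ just established forces $|S+T|=|S|\,|T|=|G|$, so $S+T=G$. For $G=\mathbb{Z}$ one additionally needs that $T$, as a tiling complement of the finite set $-S$, is periodic (Newman), to reduce to the finite case. A minor point: the ``without loss of generality $0\in S$'' in your step (b) does no work and should be deleted.
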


\begin{lem} \label{xinTthenxabinT}
Suppose $S = \{0,a,b\}$ where $\gcd(a,b,n) = 1$, $S \oplus T = \mathbb{Z}_n$ and $0 \in T$. If $x \in T$, then $x + \la a + b \ra \subseteq T$.
\end{lem}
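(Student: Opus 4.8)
The plan is to exploit the tiling structure directly. Since $S \oplus T = \mathbb{Z}_n$, every element of $\mathbb{Z}_n$ is uniquely $s + t$ with $s \in S = \{0,a,b\}$, $t \in T$. The key observation I would start from is that an element $y$ lies in $T$ if and only if none of $y$, $y-a$, $y-b$ is expressible using a nonzero element of $S$ together with a $T$-element — but more usefully, uniqueness of the tiling means exactly one of $y, y-a, y-b$ lies in $T$ for each $y$ (since $y = 0 + y = a + (y-a) = b + (y-b)$, and the representation $y = s + t$ is unique). So $T$ is a transversal: for every $y$, precisely one of $y-0, y-a, y-b$ is in $T$.

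First I would take $x \in T$ and aim to show $x + (a+b) \in T$; iterating this (and using that $\langle a+b\rangle$ is generated by $a+b$) gives the full coset containment, so it suffices to prove the single step $x \in T \Rightarrow x + a + b \in T$. Consider the element $y = x + a + b$. By the transversal property, exactly one of $y, y-a = x+b, y-b = x+a$ lies in $T$. I want to rule out $x+a \in T$ and $x+b \in T$. Suppose $x+a \in T$; then since $x \in T$ as well, consider the element $x + a$ written two ways against $S$: we have $x+a = a + x = 0 + (x+a)$, and both $x$ and $x+a$ are in $T$, giving two distinct representations $a + x$ and $0 + (x+a)$ of the same element $x+a$ — contradicting uniqueness of the tiling. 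Symmetrically $x+b \in T$ is impossible. Hence $y = x+a+b \in T$, as desired.

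Let me double-check the logic of the transversal step, since that is where the argument really lives: because $S \oplus T = \mathbb{Z}_n$, for the fixed element $y$ there is a unique pair $(s,t) \in S \times T$ with $y = s + t$; the three candidates for $t$ are $y - 0, y-a, y-b$, so exactly one of these three is in $T$. Combined with the uniqueness argument in the previous paragraph (which shows $x \in T$ together with $x+a \in T$ or $x+b \in T$ contradicts the distinctness-of-representations clause in the definition of $S \oplus T$), we conclude $x + a + b \in T$. Then induction on $k$ gives $x + k(a+b) \in T$ for all $k \ge 0$, and since $\mathbb{Z}_n$ is finite this yields $x + \langle a+b\rangle \subseteq T$.

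The only mild subtlety — and the step I would be most careful about — is making sure the "two representations" really are distinct as elements of $S \times T$: in the case $x+a \in T$, the representations are $(a, x)$ and $(0, x+a)$, which are distinct provided $a \neq 0$, and $a \neq 0$ since $\{0,a,b\}$ has three elements. (If one worried that $x = x+a$, i.e. $a = 0$, that is excluded for the same reason; and the case $b$ is identical with $a \neq 0$ replaced by $b \neq 0$.) No use is made of $\gcd(a,b,n)=1$ here beyond what is already baked into $S$ having size $3$, so the proof is short and purely combinatorial.
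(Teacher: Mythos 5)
Your proof is correct and is essentially the paper's argument: the paper phrases the transversal property as ``every element lies in exactly one of $T$, $a+T$, $b+T$'' and rules out $x+a+b \in a+T$ and $x+a+b \in b+T$ by noting $x+b \in b+T$ and $x+a \in a+T$, which is exactly your uniqueness-of-representation step in different notation. No substantive difference.
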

\begin{proof}
Note that because $S \oplus T = \mathbb{Z}_n$, every element of $\mathbb{Z}_n$ belongs to exactly one of the sets $T$, $a + T$, $b + T$.

Suppose $x \in T$. If $x + a + b \in b + T$, then $x + a \in T$. However, $x + a \in a + T$. If $x + a + b \in a + T$, then $x + b \in T$. However, $x + b \in b + T$. Hence $x + a + b \in T$ and, repeating the argument, $x + \la a + b \ra \subseteq T$.

\end{proof}

\begin{prop} \label{P3iffcond3}
Let $S = \{0, a, b\} \subseteq \mathbb{Z}_n$ where $\gcd(a,b,n)=1$.  Then $p_n(S)=3$ if and only if $3|n$ and $a$ and $b$ are in different nonzero $\bmod \hspace{1mm} 3$ congruence classes .
\end{prop}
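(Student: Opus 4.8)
The plan is to prove both directions by relating $S$-polychromatic $3$-colorings of $\mathbb{Z}_n$ to tilings $S \oplus T = \mathbb{Z}_n$, using Theorem \ref{polyifftiles}, and then extracting the congruence condition from Lemma \ref{xinTthenxabinT}. For the easy direction, suppose $3 \mid n$ and $a \equiv i$, $b \equiv j \pmod 3$ with $\{i,j\} = \{1,2\}$: then Example \ref{expolynum3} (or a direct check that the periodic coloring $RBYRBY\cdots$ works, which needs $3\mid n$) gives $p_n(S) = 3$. So the content is the forward direction.

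For the forward direction, assume $p_n(S) = 3$. By Theorem \ref{polyifftiles}, $S$ tiles $\mathbb{Z}_n$: there is $T$ with $0 \in T$ and $S \oplus T = \mathbb{Z}_n$. By Lemma \ref{xinTthenxabinT}, $T$ is a union of cosets of $H := \la a+b \ra$; in particular $|H|$ divides $|T| = n/3$. I would next argue that actually $H = \{0\}$ is impossible unless $n$ is small, and more usefully pin down $|H|$. The key observation: since $T$ is $H$-invariant and $S \oplus T = \mathbb{Z}_n$, the three sets $T, a+T, b+T$ partition $\mathbb{Z}_n$ into $H$-cosets, so $a$ and $b$ must lie in distinct cosets of $H$ and both distinct from the coset $H$ itself — i.e. $H, a+H, b+H$ are the three cosets and $[\mathbb{Z}_n : H] = 3$, so $|H| = n/3$ and $a + b \in H$ has order $3$, giving $3(a+b) \equiv 0$, i.e. $3 \mid n$ once we know $a+b$ generates an index-$3$ subgroup. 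From $[\mathbb{Z}_n : \la a+b\ra] = 3$ we get $3 \mid n$ and $a+b \equiv$ a generator of the order-$3$ subgroup, so $a + b \not\equiv 0 \pmod 3$ is false — wait, rather $a+b$ is divisible by $n/3$ but not by $n$, which modulo $3$ forces... here I need to be careful: I would instead pass to the mod-$3$ picture directly. Reduce mod $3$: the partition $\mathbb{Z}_n = T \sqcup (a+T) \sqcup (b+T)$ into three cosets of the index-$3$ subgroup $3\mathbb{Z}_n$ (this is the only index-$3$ subgroup when $3\mid n$) means $0, a, b$ represent the three distinct classes mod $3$, hence $\{a \bmod 3, b \bmod 3\} = \{1,2\}$.

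The main obstacle is establishing that the index of $H := \la a+b \ra$ is exactly $3$ (equivalently $|T|$ is a single... no, $T$ is a union of $H$-cosets and has size $n/3$, so $|H|$ divides $n/3$, but I need $|H| = n/3$). To get this, I would use that $a + T$ and $b + T$ are also $H$-invariant (translates of an $H$-invariant set), so the whole partition descends to a partition of the quotient $\mathbb{Z}_n / H$ into three sets — the images of $T, a+T, b+T$ — and these are translates of the single set $T/H$ by $0, \bar a, \bar b$. A set $\overline T$ in a finite abelian group $Q$ such that $\overline T, \bar a + \overline T, \bar b + \overline T$ partition $Q$ forces $|\overline T| = |Q|/3$; applying Lemma \ref{xinTthenxabinT} again inside $Q$ (where $a+b$ has been killed, so $\la \bar a + \bar b\ra = \{0\}$) gives no further collapse, and a short argument — e.g. that $\overline T$ must then be a coset of a subgroup, or a direct counting/connectivity argument on the Cayley-type graph with connection set $\{\bar a, \bar b, -\bar a, -\bar b\}$ — should force $\overline T$ to be exactly $\{0\}$, i.e. $|H| = n/3$ and $Q \cong \mathbb{Z}_3$. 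I expect this final step (ruling out $|H| < n/3$, i.e. showing the quotient is $\mathbb{Z}_3$) to be where the real work lies; it may be cleanest to invoke Theorem \ref{newtile77} of Newman applied to $S$ as a set of integers — since a $3$-coloring of $\mathbb{Z}_n$ lifts to a periodic $3$-coloring of $\mathbb{Z}$, $S$ tiles $\mathbb{Z}$, and Newman's criterion with $p = 3$, $\alpha = 1$ says all pairwise differences $a, b, b-a$ have the same $3$-adic valuation; valuation$(a) = $ valuation$(b)$ combined with valuation$(b - a)$ equal to that common value forces $a \equiv b \not\equiv 0$ is impossible, so in fact the common valuation must be $0$ and then $a, b, a - b$ all coprime-to-$3$ with $a - b$ also coprime to $3$ pins down $a \not\equiv b \pmod 3$ and both nonzero, while $3 \mid n$ comes from the period. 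I would present whichever of these two routes (self-contained quotient argument vs. citing Newman) is shorter, likely the Newman route given it is already set up in the excerpt.
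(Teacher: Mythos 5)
You set up the right framework (Theorem \ref{polyifftiles} to get a tiling $S \oplus T = \mathbb{Z}_n$, then Lemma \ref{xinTthenxabinT} to see that $T$ is a union of cosets of $H = \la a+b\ra$), and you even wrote down the two facts that finish the proof --- $|T| = n/3$ and $|H|$ divides $|T|$ --- but you then abandoned them to chase a stronger claim that is false. Your main route tries to show $[\mathbb{Z}_n : H] = 3$, i.e.\ that $H$, $a+H$, $b+H$ are \emph{the} three cosets and $\overline{T} = \{0\}$ in the quotient. This fails: take $n = 18$, $S = \{0,1,11\}$. Here $p_{18}(S) = 3$ and $T = \la 3 \ra = \{0,3,6,9,12,15\}$, but $a + b = 12$ generates $\{0,6,12\}$, a subgroup of index $6$, and $T$ is the union of \emph{two} of its cosets; in the quotient $\mathbb{Z}_6$ the image of $T$ is $\{\bar 0, \bar 3\}$, not $\{0\}$. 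So the ``short argument'' you hope will force $\overline{T} = \{0\}$ does not exist, and the step where you conclude ``$H, a+H, b+H$ are the three cosets'' confuses ``$T$ is a union of $H$-cosets'' with ``$T$ is a single $H$-coset.''

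The actual finish is one line and uses only what you already had: since $|H|$ divides $|T| = n/3$ and $|H| = n/\gcd(a+b,n)$, you get $\frac{n}{\gcd(a+b,n)} \cdot q = \frac{n}{3}$ for some integer $q$, i.e.\ $\gcd(a+b,n) = 3q$, hence $3 \mid (a+b)$ (and $3 \mid n$ already follows from $n = 3|T|$). Since $\gcd(a,b,n)=1$ and $3 \mid n$, the elements $a$ and $b$ cannot both be divisible by $3$; combined with $3 \mid (a+b)$ this forces neither to be divisible by $3$ and $\{a \bmod 3, b \bmod 3\} = \{1,2\}$. That is exactly the paper's proof. Your fallback via Theorem \ref{newtile77} could be patched into a correct argument (you still need $3 \mid n$ from $n = 3|T|$ to force the common $3$-adic valuation of $a$, $b$, $b-a$ to be $0$; equality of the valuations alone does not give this, and your sentence deriving the conclusion is garbled), but note that the paper deliberately avoids Newman's theorem here so that Proposition \ref{P3iffcond3} can serve as the promised elementary proof of Newman's result in the $|S|=3$ case; invoking it would make that application circular.
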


\begin{proof} If $3|n$ and $a$ and $b$ are in different nonzero $\bmod \hspace{1mm} 3$ congruence classes then clearly the alternating coloring $RBYRBY\ldots$ is polychromatic, so $p_n(S)=3$.  Conversely, suppose $p_n(S)=3$.  Hence, by Theorem \ref{polyifftiles}, $S$ tiles $\mathbb{Z}_n$.  

Let $T \subseteq \mathbb{Z}_n$ such that $\mathbb{Z}_n = \{0,a,b\} \oplus T$ and $0 \in T \subseteq \mathbb{Z}_n$. Therefore, $n = 3|T|$ which implies $n \equiv 0 \bmod 3$. By Lemma \ref{xinTthenxabinT}, for any $x \in T$, the coset $x + \langle a+b \rangle$ is a subset of $T$, so $T$ is the disjoint untion of cosets of $\la a + b \ra$. Therefore, there is some integer $q$ such that $q |\langle a+b \rangle| = |T| = \frac{n}{3}$. Also, $|\langle a + b \rangle| = \frac{n}{gcd(a+b,n)}$. Thus, $q \frac{n}{gcd(a+b,n)} = \frac{n}{3}$, which implies $3q = gcd(a+b, n)$. Hence, $3|(a+b)$. Since 3 cannot divide both $a$ and $b$, it follows that $a$ and $b$ are in different nonzero mod 3 congruence classes.        
\end{proof}

\section{Subsets of the form $\{0,1,b\}$}

As shown in Proposition \ref{reduce0abto01b}, every set $S$ of size 3 is equivalent to a set $S'$ with two possible forms.  In this section we will consider case $i$ of Proposition \ref{reduce0abto01b}, that $S'$ contains 0 and 1.

\begin{lem}\label{color013}
If $n$ is odd, $5 \le n$, and $n \neq 7$, then there exists a $\{0,1,3\}-$polychromatic coloring of $\mathbb{Z}_n$ with two colors.
\end{lem}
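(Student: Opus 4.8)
The plan is to build the $2$-coloring out of monochromatic blocks of lengths $2$ and $3$, after first pinning down exactly which block patterns avoid a monochromatic translate of $\{0,1,3\}$. Viewing a $2$-coloring $\chi$ of $\mathbb{Z}_n$ as a cyclic binary word and decomposing it into maximal monochromatic runs, the first claim to establish is: \emph{if every run has length $2$ or $3$, then $\chi$ is $\{0,1,3\}$-polychromatic}. I would prove this by contradiction. If some translate $\{x,x+1,x+3\}$ is monochromatic of color $a$, then $\chi(x)=\chi(x+1)$ forces $x$ and $x+1$ into a common run $R$ with $2\le|R|\le 3$, so $R$ is one of $\{x,x+1\}$, $\{x-1,x,x+1\}$, $\{x,x+1,x+2\}$. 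A short case check on where $x+3$ falls — in the first two cases $\chi(x+2)\neq a$ and the run through $x+2$ has length at least $2$ and avoids $x+1$, hence contains $x+3$; in the third case $\chi(x+2)=a$ and maximality of $R$ ends the run there — shows $\chi(x+3)\neq a$ in every case, a contradiction. This step is short and is the conceptual heart of the argument.

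Next I would observe that it therefore suffices to partition the cycle $\mathbb{Z}_n$ into $p$ runs of length $2$ and $q$ runs of length $3$ with $p,q\ge 1$ (so both colors are actually used) and $p+q$ even (so the two colors can be assigned alternately around the cycle and close up consistently). This requires $2p+3q=n$; since $n$ is odd, $q$ must be odd, and then $p+q$ even forces $p$ odd as well, so I need a solution in \emph{positive odd} integers. This is the only place a small arithmetic check enters: for $n\equiv 1\pmod 4$ with $n\ge 5$ take $q=1$ and $p=(n-3)/2$, and for $n\equiv 3\pmod 4$ with $n\ge 11$ take $q=3$ and $p=(n-9)/2$; in each case one checks directly that the chosen values are positive and odd. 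The unique odd $n\ge 5$ left uncovered is exactly $n=7$, which matches the hypothesis of the lemma.

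Finally I would assemble the coloring explicitly: concatenate cyclically $p$ blocks of length $2$ and then $q$ blocks of length $3$, coloring the blocks alternately with the two colors. Since $p+q$ is even this closes up consistently around $\mathbb{Z}_n$, consecutive blocks have opposite colors so each is a genuine maximal run, every run has length $2$ or $3$, and both colors occur. Step~1 then gives that the coloring is $\{0,1,3\}$-polychromatic, completing the proof. I do not anticipate a serious obstacle: the run-length statement in Step~1 is elementary and is the key idea, and the one thing to be careful about is the parity bookkeeping in Step~2 — namely that the required block decomposition exists \emph{precisely} for the odd $n\ge 5$ with $n\neq 7$ — which is exactly where the excluded value $n=7$ is forced to appear.
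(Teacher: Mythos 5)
Your proposal is correct and follows essentially the same route as the paper: partition $\mathbb{Z}_n$ cyclically into an even number of blocks of lengths $2$ and $3$, color the blocks alternately, and observe that every translate of $\{0,1,3\}$ then meets two adjacent blocks. You spell out the run-length verification and the parity bookkeeping (which forces the exclusion of $n=7$) in more detail than the paper does, but the underlying construction is the same.
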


\begin{proof}
It is easy to check that each integer greater than 3, except 7, is the sum of an even number of 2's and 3's.  We color $\mathbb{Z}_n$ by alternating colors of strings of 2 or 3 consecutive elements with the same color.  Of course there must be an even number of strings.  For example, 9=2+2+2+3, so the coloring would be $RRBBRRBBB$; 11=2+3+3+3, so the coloring would be $RRBBBRRRBBB$.  Clearly any translate of S hits two consecutive strings, so gets both colors.       
\end{proof}

As will be seen in the proof of Theorem \ref{theconclof01ell}, it is easy to show that $p_n(\{0, 1, b\}) \geq 2$ if $b$ or $n$ is even.  The following lemma takes care of the more difficult case.

\begin{lem}\label{firstcopremain01ell}
Let $9 \le n$, $b$ and $n$ both be odd, and $S = \{0,1, b\} \subset \mathbb{Z}_n$. There exists an $S-$polychromatic coloring of $\mathbb{Z}_n$ with two colors.
\end{lem}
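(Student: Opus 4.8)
We want a $\{0,1,b\}$-polychromatic $2$-coloring of $\mathbb{Z}_n$ when $n,b$ are both odd and $n\ge 9$. By Lemma~\ref{ellsmall} we may assume $b\le\lceil n/2\rceil$, so $b$ is an odd integer with $3\le b\le (n+1)/2$ (the case $b=1$ is trivial and $b$ odd rules out $b=2$). A $2$-coloring is $S$-polychromatic exactly when no translate of $S$ is monochromatic, i.e.\ no monochromatic triple $\{x,x+1,x+b\}$ exists, equivalently the color class $A$ of (say) red is a blocking set: neither $A$ nor its complement contains a translate of $\{0,1,b\}$. I will build the coloring by prescribing a periodic pattern of maximal monochromatic runs, generalizing the run-length idea from Lemma~\ref{color013}.

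**Key steps.** First, reduce to the run-length picture: since forbidding a monochromatic $\{x,x+1\}$ would be too strong, instead I want to forbid monochromatic $\{x,x+1,x+b\}$. Write the coloring cyclically as blocks $B_1 B_2 \cdots B_{2m}$ of constant color, alternating red/blue, with $|B_i|=\ell_i\ge 1$. A monochromatic translate of $S$ occurs iff some color class contains three points, two adjacent and one at distance $b$; because adjacent points force being in the same block, the forbidden configuration is: a block $B_i$ contains two adjacent points $x,x+1$ and also the point $x+b$ lands in a same-colored block. So the requirement becomes a condition on how the cyclic sequence of block boundaries interacts with the shift by $b$. Second, I would choose all blocks to have length $2$ or $3$ as in Lemma~\ref{color013}, and show that for the shift by an \emph{odd} $b$ one can always arrange the parities/positions of the "length-$3$" blocks so that whenever $x,x+1$ lie in one block, $x+b$ lies in a block of the opposite color. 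The cleanest way: partition $\mathbb{Z}_n$ into arcs and track, for each residue, the color as a function of position mod the block pattern; since $b$ is odd and each block has even-ish length, a shift by $b$ roughly flips color "most of the time," and the finitely many bad spots (near length-$3$ blocks) can be avoided by placing the length-$3$ blocks appropriately. Third, handle small or exceptional $n$ (e.g.\ $n=9,11,13$ and perhaps $b$ close to $n/2$) by exhibiting explicit colorings, analogous to the $RRBBRRBBB$ examples.

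**Main obstacle.** The delicate part is the interaction between the shift by $b$ and the placement of the odd-length (length-$3$) blocks: a monochromatic $\{x,x+1,x+b\}$ is created precisely when a length-$3$ block, which contains two adjacent pairs, maps under $+b$ into a block of the same color. One must show the number of length-$3$ blocks and their positions can be chosen to dodge all such collisions simultaneously, which is a covering/counting argument on $\mathbb{Z}_n$ that likely splits into cases according to $n \bmod 4$ (to control how many length-$2$ vs length-$3$ blocks are needed so that $\sum \ell_i = n$ with an even number of blocks) and according to the residue of $b$ relative to the block pattern. I expect one also needs a separate, easy sub-argument when $b=3$ (covered already by Lemma~\ref{color013} for $n\ne 7$) and when $b$ is near $(n+1)/2$, where the shift behaves like a near-reflection and a tailored pattern is cleaner than the generic run-length construction. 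Once the generic construction is shown to work for all sufficiently large $n$, a finite check disposes of the remaining small cases, completing the proof.
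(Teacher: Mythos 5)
There is a genuine gap: your proposal is a plan, not a proof, and it defers exactly the step that constitutes the entire content of the lemma. You say ``one can always arrange the parities/positions of the length-$3$ blocks so that whenever $x,x+1$ lie in one block, $x+b$ lies in a block of the opposite color,'' and later ``one must show the number of length-$3$ blocks and their positions can be chosen to dodge all such collisions simultaneously,'' but you never exhibit such an arrangement or prove it exists. Worse, the heuristic you lean on --- that for odd $b$ a shift by $b$ ``roughly flips color most of the time'' in a pattern of blocks of length $2$ and $3$ --- is false for half the relevant $b$. Take the basic pattern $RRBB\,RRBB\ldots$ (period $4$): if $b\equiv 1\pmod 4$, then the adjacent same-colored pair starting at a position $x\equiv 0\pmod 4$ (an $RR$ pair) is sent by $+b$ to a position $\equiv 1\pmod 4$, which is again $R$; so \emph{every} such pair is bad, not a ``finitely many bad spots near length-$3$ blocks'' situation. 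A genuinely different block pattern is needed for $b\equiv 1\pmod 4$, and you give no candidate. Since $n$ is odd you also need an odd number of length-$3$ blocks, so the exceptional blocks cannot be made to vanish; the collision-avoidance argument is the whole difficulty and it is absent.

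For comparison, the paper's construction does not use blocks of length $2$ and $3$ at all. It colors $x$ by its residue $y$ modulo $b-2$ (residues $0$ and all odd residues red, positive even residues blue), so that within each period the only same-colored adjacent pair sits at residues $0,1$; since $b\equiv 2\pmod{b-2}$, the third element of that translate lands at residue $2$, which is blue. This kills all translates except those straddling the boundary where $n$ is not a multiple of $b-2$, and the proof then does explicit surgery on the last $r=n-m(b-2)$ positions in five cases ($r=0,1,2,3$, and $r\ge 4$). If you want to salvage your approach you must either (a) produce, for each pair of residues of $n$ and $b$ modulo small numbers, an explicit placement of the length-$3$ blocks and verify no collision, or (b) switch to a period tied to $b$ itself, as the paper does. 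As written, the proposal does not establish the lemma.
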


\begin{proof} 
It can be assumed that $5 \le b \leq \lceil \frac{n}{2} \rceil$, by Lemma \ref{ellsmall} and \ref{color013}, and $n = m(b - 2) + r$, with $0 \le r \le b-3$. Since $2(\lceil \frac{n}{2} \rceil -2)+r \le n-3+r \le n$, $m$ is at least 2.

Let $x \in \mathbb{Z}_n$ and $x \equiv y \bmod (b -2)$ such that $0 \le y \le b - 3$. If $r=0$, then define $\chi_0: \mathbb{Z}_n \to \{R, B\}$ such that 
\[\chi_0(x) = \begin{cases}
R &\text{ if } y = 0\\
R &\text{ if }  y \text{ is odd}\\
B &\text{ if } y \text{ is even} \text{ and } y > 0 .
\end{cases}\]

\noindent If $\chi_0(x) = \chi_0(x+1)$, then $x \equiv 0 \bmod b -2$ and $\chi_0(x+b) = B$. This means that $\chi_0(x) \neq \chi_0(x+1)$ or $\chi_0(x) \neq \chi_0(x+b)$. Therefore, every translate of $S = \{x, x+1, x+b\}$ will contain two colors under $\chi_0$.

Throughout the remainder of the proof each of the colorings that are constructed will use $\chi_0$ to assign colors to at least the first $(m-1)(b-2)$ elements of $\mathbb{Z}_n$. 

If $r=1$, then define $\chi_1: \mathbb{Z}_n \to \{R, B\}$ such that
\[\chi_1(x) = \begin{cases}
\chi_0(x) &\text{ if } x \le n-b \\
R &\text{ if } x  \text{ is even}\text{ and } n - b < x < n-1 \\

B &\text{ if } x  \text{ is odd} \text{ and } n - b < x < n-1 \\
B &\text{ if } x  = n-1.
\end{cases}\]

\noindent 
In $\chi_1$, the two translates that are not colored completely by $\chi_0$ and don't have $\chi_1(x) \neq \chi_1(x+1)$ are $\{n - b, n - b + 1, 0\}$ and $\{n - 2, n - 1, b - 2\}$. In both cases, the nonconsecutive element of the translate is the other color.

If $r=2$, then define $\chi_2: \mathbb{Z}_n \to \{R, B\}$ such that
\[\chi_2(x) = \begin{cases}
\chi_0(x) &\text{ if } x \le n-b-1 \\
R &\text{ if } x  = n-b \\
B &\text{ if } x  = n- b +1 \\
R &\text{ if } x  \text{ is odd}\text{ and } n - b +1 < x\\
B &\text{ if } x  \text{ is even} \text{ and } n - b +1 < x.
\end{cases}\]

\noindent 
In $\chi_2$, the only translate that is not colored completely by $\chi_0$ and doesn't have $\chi_2(x) \neq \chi_2(x+1)$ is $\{n - b + 1, n - b + 2, 1\}$; however, $\chi_2(n - b + 1) \neq \chi_2(1)$.

If $r=3$, then define $\chi_3: \mathbb{Z}_n \to \{R, B\}$ such that
\[\chi_3(x) = \begin{cases}
\chi_0(x) &\text{ if } x \le n-b-2 \\
R &\text{ if } x  = n-b -1\\
R &\text{ if } x  \text{ is even}\text{ and } n - b -1 < x < n-1\\
B &\text{ if } x  \text{ is odd} \text{ and } n - b -1 < x < n-1\\
B &\text{ if } x  = n- 1.
\end{cases}\]

\noindent 
In $\chi_3$, the two translates that are not colored completely by $\chi_0$ and don't have $\chi_3(x) \neq \chi_3(x+1)$ are $\{n - b - 1, n - b, n - 1\}$ and $\{n - 2, n - 1, b - 2\}$. In both cases, the nonconsecutive element of the translate is the other color.

Assume $4 \le r$. An $S$-polychromatic coloring, $\chi_4:  \mathbb{Z}_n \to \{R, B\}$, will be constructed. Define $\chi_4(x) = \chi_0(x)$ for $x \le n - b - r +4$, $\chi_4(n-r+2) = B$ and $\chi_4(n-1) = B$. So each translate with $0 \le x \le n - b - r+3$ contains both colors. The two translates with $n-2 \le x$ also contain both colors even though $\chi_4(n-2)$ has not been defined unless $r=4$. This means there is an \emph{option} for assigning a color to $n-2$. Therefore, $\chi_4(x)$ can be defined, and will be defined, such that for $n- b+2 \le x \le n-2$ the assigned colors alternate while keeping  $\chi_4(n-r+2) = B$. This means that each translate with $n - b +2 \le x$ contains both colors.

If $r=4$, then $n- b = n - b -r +4$ has already been assigned the color $B$ and the translate $\{n-b, n-b+1, 0\}$ contains both colors. By defining $\chi_4(n-b+1)$ to be $B$, the translate $\{n-b+1, n-b+2, 1\}$ contains both colors and $\chi_4$ is an $S$-polychromatic coloring.

For $r \neq 4$, consider the translates $\{n - b, n - b + 1, 0 \}$ and $\{n - b + 1, n - b + 2, 1\}$. If $\chi_4(n - b + 2) = B$, then $n - b + 1$ has an option since $\chi_4(1) = R$. If $\chi_4(n - b + 2) = R$, then $\chi_4(n - b +1)$ must be defined as $B$ and $n- b$ has an option  since $\chi_4(0) = R$. Therefore, there will be an option for assigning a color to $n-b$ or $n-b +1$. This allows for $\chi_4(x)$ to be defined for $n-b-r+5 \le x \le n-b+1$ such that the colors alernate while keeping  $\chi_4(n-b-r+4)\neq \chi_4(n-b-r+5)$. Thus, $\chi_4$ is an $S$-polychromatic coloring.
\end{proof}

\begin{thm}\label{theconclof01ell}
Let $3 \le n$ and $S = \{0, 1, b \} \subseteq \mathbb{Z}_n$. If $n \neq 7$ or $b \neq 3$ or $5$, there is an $S$-polychromatic coloring of $\mathbb{Z}_n$ with two colors.
\end{thm}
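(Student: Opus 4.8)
The plan is to derive the theorem from the lemmas already proved by a short case split on the parities of $n$ and $b$; only one small new coloring is needed. I take $0,1,b$ distinct (as $S$ has size three), so $2\le b\le n-1$ and every translate of $S$ is an honest $3$-element subset; thus an $S$-polychromatic $2$-coloring is precisely a $2$-coloring of $\mathbb{Z}_n$ in which no translate of $S$ is monochromatic.

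First the easy cases, which together dispose of everything except when $n$ and $b$ are both odd. If $n$ is even, the alternating coloring $\chi(i)=i\bmod 2$ is well defined on $\mathbb{Z}_n$, and $x$ and $x+1$ always receive opposite colors, so no translate of $S$ is monochromatic. If $n$ is odd but $b$ is even, I would instead color the representatives $0,1,\dots,n-1$ by $\chi(i)=i\bmod 2$: every translate $\{x,x+1,x+b\}$ with $0\le x\le n-2$ contains the adjacent pair $x,x+1$ and hence both colors, and the only remaining (wrap-around) translate $\{n-1,0,b-1\}$ also gets both colors, because $\chi(n-1)=\chi(0)$ (both even) while $b-1$ is odd. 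This is the only genuinely new ingredient, and it is exactly the ``easy'' fact announced in the paragraph preceding the theorem; note that it needs no reduction of $b$, so Lemma~\ref{ellsmall} is not invoked here.

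It remains to treat the case where $n$ and $b$ are both odd. For $n\ge 9$ this is precisely Lemma~\ref{firstcopremain01ell}. The small odd moduli I would check directly: $n=3$ forces $b=2$, which is even and hence already covered above; $n=5$ forces $b=3$, and Lemma~\ref{color013} (applicable since $5\le n=5$ is odd and $\ne 7$) supplies a $\{0,1,3\}$-polychromatic $2$-coloring; and $n=7$ forces $b\in\{3,5\}$, which are precisely the pairs excluded in the hypothesis.

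I do not anticipate a real obstacle: the substance of the theorem is carried by Lemmas~\ref{color013} and \ref{firstcopremain01ell}, which are already in hand. The only points needing care are pinning down the unique wrap-around translate in the $b$-even case (this uses both the normalization $\{0,1\}\subseteq S$ and the parity of $b$), and confirming that each exceptional modulus $n\in\{3,5,7\}$ --- including the vacuous instances --- has been accounted for, so that the stated exception ``$n=7$ and $b\in\{3,5\}$'' is exactly the set of genuine exceptions, in agreement with Example~\ref{exfanoplane}.
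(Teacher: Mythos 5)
Your proposal is correct and follows essentially the same route as the paper: split on the parities of $n$ and $b$, use an alternating (or once-perturbed alternating) coloring in the easy cases, and invoke Lemmas~\ref{color013} and \ref{firstcopremain01ell} when $n$ and $b$ are both odd. Your explicit check of the small moduli $n=3,5,7$ and of the unique wrap-around translate in the $b$-even case is slightly more careful than the paper's one-line treatment, but the substance is identical.
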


\begin{proof}
If $n$ is even then alternating colors $RBRB\ldots$ is clearly an $S-$polychromatic coloring.  If $n$ is odd and $b$ is even then the coloring $RRBRBRBR\ldots$ which has one repeated color, and otherwise alternates colors, is $S-$polychromatic.  If $n$ and $b$ are both odd then an $S-$polychromatic $2-$coloring exists by Lemmas \ref{color013} and \ref{firstcopremain01ell}, except in the exceptional case when $n=7$.     
\end{proof}

\section{Subsets not equivalent to $\{0,1,b\}$}

\noindent Consider the $s \times t$ matrix
\[M =
\begin{bmatrix}
x_{00} & x_{01} & \ldots & x_{0(t-1)}\\
x_{10} & x_{11} & \ldots & x_{1(t-1)}\\
\vdots & \vdots & \ddots & \vdots\\
x_{(s-1)0} & x_{(s-1)1} & \ldots & x_{(s-1)(t-1)}.
\end{bmatrix}\]

\noindent An {\em ell - tile} of $M$ is a subset of entries of $M$ consisting of entries of a $2 \times 2$ submatrix without the lower right entry:
\[\begin{array}[t]{cc}
               \begin{array}{|c|c|} 
\hline
x_{ij} & x_{i (j+1)} \\ 
\hline
x_{(i+1)j} &  \\
\hline
\end{array}.
\end{array}\]

The indices are read $\bmod \hspace{1mm} s$ and $\bmod \hspace{1mm} t$, so ell-tiles are allowed to `wrap around' ($i=s-1$ or $j=t-1$).  An {\em ell-tile $2-$coloring} of $M$ is a coloring of the entries of $M$ with two colors such that both colors appear in every ell-tile of $M$.

\begin{lem} \label{matrixcoloring}
If $2 \le s,t$, then every $s \times t$ matrix has an ell - tile $2$-coloring.
\end{lem}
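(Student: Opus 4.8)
The plan is to split into three cases according to the parities of $s$ and $t$. The starting observation is that the ell-tile at position $(i,j)$ contains the horizontally adjacent pair $\{x_{ij}, x_{i(j+1)}\}$ and the vertically adjacent pair $\{x_{ij}, x_{(i+1)j}\}$, so it suffices to $2$-color the entries (with colors $R$ and $B$) so that no ell-tile is monochromatic. If $t$ is even, color $x_{ij}$ by the parity of $j$; then every pair of cyclically adjacent entries within a row is bichromatic (evenness of $t$ is used here, for the wrap-around pair $x_{i(t-1)}, x_{i0}$), so every ell-tile gets both colors. If $s$ is even we argue the same way with the parity of $i$, or simply observe that transposing $M$ maps ell-tiles to ell-tiles and so reduces to the previous case. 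The real work is the remaining case in which $s$ and $t$ are both odd, hence both at least $3$: there no single row and no single column can be $2$-colored without a repeated cyclically adjacent pair, so a more careful construction is needed.

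For the both-odd case I would build the coloring one row at a time out of what I will call \emph{standard rows}. For $t$ odd, define the standard row with \emph{defect} $p \in \mathbb{Z}_t$ and \emph{color} $c$ to be the length-$t$ cyclic string that strictly alternates colors except that $x_p = x_{p+1} = c$; one checks this is consistent precisely because $t$ is odd, and that $\{x_p, x_{p+1}\}$ is its only pair of equal cyclically adjacent entries. The point of the definition: a monochromatic ell-tile at $(i,j)$ forces $x_{ij} = x_{i(j+1)}$, i.e.\ forces $j$ to be a defect of row $i$, so if every row is standard the \emph{only} thing we must arrange is that for every $i$ the entry of row $i+1$ at the defect position of row $i$ receives the color opposite to that defect. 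A short computation expresses the entry, at an arbitrary position $q$, of the standard row with defect $p$ and color $c$ in terms of the parity of $\delta := (q-p)\bmod t$ (the entry equals $c$ unless $\delta$ is even and nonzero). Consequently the condition above becomes a relation between the defect colors $c_{i+1}$ and $c_i$ that is of type ``must be equal'' when the shift $\delta_i := (p_i - p_{i+1})\bmod t$ is even and nonzero, and of type ``must differ'' otherwise.

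It remains to choose the defect shifts $\delta_0,\dots,\delta_{s-1}$, subject to two requirements: they must sum to $0 \bmod t$ (so the defect positions $p_i$, given by $p_0 = 0$ and $p_{i+1}\equiv p_i - \delta_i$, close up consistently around the cycle of $s$ rows), and the number of ``must differ'' constraints must be even (so the defect colors $c_i$ can be chosen consistently around $\mathbb{Z}_s$). I would take $\delta_0 = \delta_1 = t-1$, $\delta_2 = 2$, and $\delta_i = 0$ for $3 \le i \le s-1$: the sum is $2t \equiv 0 \bmod t$; the values $t-1$ and $2$ are even and nonzero since $t \ge 3$, so those three steps are ``must be equal''; each of the $s-3$ zero shifts is ``must differ''; and $s-3$ is even because $s$ is odd. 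Hence one may set $c_0 = c_1 = c_2 = c_3 = R$ and alternate the colors thereafter, which closes up consistently, and the resulting matrix has no monochromatic ell-tile. I expect the main obstacle to be precisely this both-odd construction — identifying the right family of rows and the right ``drift'' of the defects so everything meshes cyclically — while the even cases are immediate; one should also check the degenerate sub-case $s = 3$ separately (no zero shifts, all defect colors equal to $R$), but that is straightforward.
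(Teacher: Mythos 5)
Your proof is correct, and your even cases coincide with the paper's (alternate rows by parity of $i$ when $s$ is even, columns by parity of $j$ when $t$ is even). Where you genuinely diverge is the both-odd case. The paper's construction there is a two-entry repair of the na\"{\i}ve checkerboard: color $x_{ij}$ red when $i\equiv j \bmod 2$, observe that the only monochromatic ell-tile is the one spanning the corners $x_{s-1,t-1},x_{0,t-1},x_{s-1,0}$ (all red since $s,t$ are odd), and flip $x_{0,t-1}$ and $x_{s-1,0}$ to blue --- a one-sentence verification that no new monochromatic ell-tile is created. Your ``standard rows with a drifting defect'' construction is substantially heavier: you reduce the problem to one equality/inequality constraint per consecutive pair of rows, then solve the two cyclic consistency conditions (defect positions summing to $0 \bmod t$, an even number of ``must differ'' constraints) with the explicit shifts $t-1,t-1,2,0,\dots,0$. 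I checked the details --- the parity formula for the entry at offset $\delta$ from the defect, the translation of the ell-tile condition into the equal/differ dichotomy, and the closing-up of both cycles using $t\ge 3$ odd and $s\ge 3$ odd --- and they all hold, so the argument is complete. What your approach buys is a structural description of a whole family of valid colorings (any admissible defect walk works), which could be useful if one needed extra properties of the coloring; what it costs is length and case analysis that the checkerboard-plus-corner-flip avoids entirely.
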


\begin{proof}
If $s$ is even, then define $\chi$ such that
\[\chi(x_{ij}) = \begin{cases}
R &\text{ if } i \equiv 0 \bmod 2\\
B &\text{ if } i \equiv 1 \bmod 2.
\end{cases}\] 
\noindent
Also, a similar coloring that alternates the colors of the columns works when $t$ is even.

If $s$ and $t$ are both odd, then define $\chi$ such that
\[\chi(x_{ij}) = \begin{cases}
R &\text{if } i \equiv j \bmod 2 \text{ and } (i,j) \neq (0, t-1), (s-1, 0)\\
B &\text{ otherwise.}
\end{cases}\] 
If $s$ and $t$ are both odd, then a ``checker-board"€ coloring would assign the same color, say $R$, to all four corner entries, and the ell-tile with entries $x_{s-1,t-1}$, $x_{0,t-1}$, and $x_{s-1,0}$ would be monochromatic.  The coloring $\chi$ avoids this problem by changing the color of entries $x_{0,t-1}$ and $x_{s-1,0}$ from $R$ to $B$, without creating any other monochromatic ell-tiles (just changing the color of one of them would suffice as well).  
\end{proof}

The goal now is to create matrices with elements from $\mathbb{Z}_n$ such that all of the translates of $S$ correspond to ell-tiles. The matrices then can be colored by using Lemma \ref{matrixcoloring}, which will create $S-$polychromatic colorings.

\begin{lem}\label{2poly}
Let $S = \{0,a,b\} \subseteq \mathbb{Z}_n$, where $gcd(a,b,n) = 1$ but $\gcd(a,n)$ and $\gcd(b,n)$ are both greater than 1. Then $p_n(S) \geq 2$.
\end{lem}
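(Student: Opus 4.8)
The plan is to pass to a quotient group of $\mathbb{Z}_n$ in which $S$ collapses to the ``generic'' three‑element set $\{0,\bar a,\bar b\}$ generating a direct product of two cyclic groups, to $2$‑color that quotient with the matrix/ell‑tile machinery of Lemma \ref{matrixcoloring}, and then to pull the coloring back to $\mathbb{Z}_n$.

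First I would record the elementary group‑theoretic facts. Set $d_a=\gcd(a,n)$ and $d_b=\gcd(b,n)$. Since $\gcd(d_a,d_b)$ divides $\gcd(a,b,n)=1$, the integers $d_a$ and $d_b$ are coprime, hence $d_ad_b\mid n$; also $\langle a\rangle=d_a\mathbb{Z}_n$ has order $n/d_a$ and $\langle b\rangle=d_b\mathbb{Z}_n$ has order $n/d_b$. Let $H=\langle a\rangle\cap\langle b\rangle=d_ad_b\mathbb{Z}_n$, a subgroup of order $m:=n/(d_ad_b)$, and let $\pi\colon\mathbb{Z}_n\to\mathbb{Z}_n/H$ be the quotient map, with $\bar a=\pi(a)$, $\bar b=\pi(b)$ and $\bar S=\pi(S)=\{0,\bar a,\bar b\}$. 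The structural heart of the argument is to show that in $\mathbb{Z}_n/H$ (which has order $d_ad_b$) the element $\bar a$ has order $d_b$ and $\bar b$ has order $d_a$: indeed $ka\in H$ iff $ka\in\langle b\rangle$ iff $d_b\mid ka$, and writing $a=d_a\alpha$ one checks $\gcd(\alpha,d_b)=1$, so the least such $k$ is $d_b$, and symmetrically for $\bar b$. Since $d_a$ and $d_b$ are coprime, $\langle\bar a\rangle\cap\langle\bar b\rangle=\{0\}$ and $|\langle\bar a\rangle\oplus\langle\bar b\rangle|=d_ad_b$, so $\mathbb{Z}_n/H=\langle\bar a\rangle\oplus\langle\bar b\rangle\cong\mathbb{Z}_{d_b}\times\mathbb{Z}_{d_a}$ with $\bar a\leftrightarrow(1,0)$ and $\bar b\leftrightarrow(0,1)$; in particular $\bar a,\bar b$ are nonzero and distinct, so $|\bar S|=3$. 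I expect this identification of the quotient to be the main obstacle; everything afterward is bookkeeping.

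Next I would form the $d_b\times d_a$ matrix $M$ whose $(i,j)$ entry is $i\bar a+j\bar b$, with row index read $\bmod\ d_b$ and column index $\bmod\ d_a$. Because $\bar a$ has order $d_b$ and $\bar b$ has order $d_a$, this is well defined, it is a bijection from the index set onto $\mathbb{Z}_n/H$, and the ell‑tile rooted at $(i,j)$ has entry set $(i\bar a+j\bar b)+\{0,\bar b,\bar a\}=(i\bar a+j\bar b)+\bar S$. Hence the ell‑tiles of $M$ are exactly the translates of $\bar S$ in $\mathbb{Z}_n/H$. Since $d_a,d_b\ge 2$, Lemma \ref{matrixcoloring} produces an ell‑tile $2$‑coloring of $M$, which is a $2$‑coloring $\psi$ of $\mathbb{Z}_n/H$ in which every translate of $\bar S$ gets both colors.

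Finally I would set $\chi=\psi\circ\pi\colon\mathbb{Z}_n\to\{R,B\}$. For any translate $\{x,x+a,x+b\}$ of $S$, its image under $\pi$ is $\pi(x)+\{0,\bar a,\bar b\}=\pi(x)+\bar S$, a translate of $\bar S$, which receives both colors under $\psi$; therefore $\{x,x+a,x+b\}$ receives both colors under $\chi$. Thus $\chi$ is $S$‑polychromatic and $p_n(S)\ge 2$.
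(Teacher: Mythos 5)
Your proof is correct. It runs on the same engine as the paper's --- Lemma \ref{matrixcoloring} applied to a $\gcd(b,n)\times\gcd(a,n)$ grid whose ell-tiles are exactly the translates of (the image of) $S$ --- but the reduction to that grid is genuinely different. The paper lays out all of $\mathbb{Z}_n$ in an $\frac{n}{s}\times\frac{n}{t}$ matrix $m_{ij}=ai+bj$ in which every element occurs $\frac{n}{st}$ times, and must then argue, via the block decomposition $M'$ and the shift relation $A_{i+1,j}=A_{i,j}+(pq)bs$, that repeating one ell-tile $2$-coloring periodically over the blocks is well defined, i.e., that every occurrence of a given element receives the same color; it also splits off the case of even $n$ first. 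You instead quotient by $H=\langle a\rangle\cap\langle b\rangle$, identify $\mathbb{Z}_n/H\cong\langle\bar a\rangle\oplus\langle\bar b\rangle\cong\mathbb{Z}_{d_b}\times\mathbb{Z}_{d_a}$ (your order computations for $\bar a$ and $\bar b$ are right, and $d_ad_b\mid n$ because $d_a$ and $d_b$ are coprime divisors of $n$), color the quotient bijectively as a $d_b\times d_a$ matrix, and pull back along $\pi$. This makes well-definedness automatic, since the coloring is a function on the quotient by construction, needs no parity case split, and isolates the only real content: that $\bar a$ and $\bar b$ generate complementary cyclic factors of orders $d_b,d_a\ge 2$. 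The paper's version is in exchange more hands-on and exhibits the coloring of $\mathbb{Z}_n$ explicitly as a single row of blocks; the two constructions produce essentially the same coloring, and yours is arguably the cleaner packaging.
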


\begin{proof}
If $n$ is even then either $a$ or $b$ is odd, so the alternating coloring $RBRBRB\ldots$ is polychromatic, so we can assume $n$ is odd.  Let $s=\gcd(a,n)$, $t=\gcd(b,n)$, and $M = [m_{ij}]$ be the $\frac{n}{s} \times \frac{n}{t}$ matrix with entries in $\mathbb{Z}_n$ where $m_{ij} = ai + bj$, $0 \leq i \leq \frac{n}{s} - 1$, $0 \leq j \leq \frac{n}{t} - 1$.  Note that $|\la a \ra| = \frac{n}{s}$, $|\la b \ra| = \frac{n}{t}$, and $\gcd(s,t)=1$.

If $m_{ij}=m_{i'j'}$ with $0 \le i' \le i \le t-1$ and $0 \le j, j' \le \frac{n}{t} -1$, then $a(i - i') = b (j'-j)$. Therefore, $t | a(i-i')$. This means $t | (i-i')$ since $gcd(a,b) = 1$, which implies $i=i'$ because $0 \le i-i' < t$. Since $a(i - i') = b (j'-j)$ it is also the case that $j=j'$. Therefore, each element of $\mathbb{Z}_n$ will be an entry somewhere in the first $t$ rows of $M$. In fact, the first $t$ rows of $M$ are just the $t$ cosets of $\la b \ra$ in $\mathbb{Z}_n$.

Now let $M'$ be the $\frac{n}{st} \times \frac{n}{st}$ block matrix created by partitioning $M$ into $t \times s$ blocks. Let $A_{i,j}$ be the $ij$th block of $M'$. Note that $A_{i+1, j} = A_{i,j} + at $ and  $A_{i, j+1} = A_{i,j} +bs$ and $|\la at \ra| = |\la bs \ra| = \frac{n}{st}$, so the matrix $A_{i,j} + k(bs)$ appears as a block in the $i$th row of $M'$ for each integer $k$. Furthermore, $a= ps$ for some $p \in \mathbb{Z}$ and $bq \equiv t \bmod n$ for some $q \in \mathbb{Z}$ since $t = gcd(b,n)$. Therefore,  $A_{i+1, j} = A_{i,j} + (pq)bs $, so $A_{i+1,j}$ is equal to some block in the $i$th row of $M'$.

This means that the $(i+1)$st row of $M'$ is the $i$th row of $M'$ shifted by $pq$, for all $i$. So coloring each matrix in $M'$ with the same ell-tile 2-coloring from Lemma \ref{matrixcoloring} will ensure that $M$ is a well-defined ell-tile 2-coloring. It is well-defined since every element is colored and each time an element appears it recieves the same color. It is an ell-tile 2-coloring because it is periodic using an ell-tile 2-coloring that `wraps around'. This yields an $S$-polychromatic coloring of $\mathbb{Z}_n$ with two colors. 
\end{proof}
\vspace{-2mm}
Here is an example of how to get the coloring of $\mathbb{Z}_{105}$ when $a=18$ and $b=25$. The matrix $M$ is
\scriptsize
\[
\left[
\begin{array}{ccc|ccc|ccc|ccc|ccc|ccc|ccc}
0&25&50&75&100&20&45&70&95&15&40&	65&	90&	10&	35&	60&	85&	5&30&55&80\\
18&	43&	68&	93&	13&	38&	63&	88&	8&	33&	58&	83&	3&	28&	53&	78&	103&	23&	48&	73&	98\\
36&	61&	86&	6&	31&	56&	81&	1&	26&	51&	76&	101&	21&	46&	71&	96&	16&	41&	66&	91&	11\\
54&	79&	104&	24&	49&	74&	99&	19&	44&	69&	94&	14&	39&	64&	89&	9&	34&	59&	84&	4&	29\\
72&	97&	17&	42&	67&	92&	12&	37&	62&	87&	7&	32&	57&	82&	2&	27&	52&	77&	102&	22&	47\\
\hline
90&	10&	35&	60&	85&	5&	30&	55&	80&	0&	25&	50&	75&	100&	20&	45&	70&	95&	15&	40&	65\\
3&	28&	53&	78&	103&	23&	48&	73&	98&	18&	43&	68&	93&	13&	38&	63&	88&	8&	33&	58&	83\\
21&	46&	71&	96&	16&	41&	66&	91&	11&	36&	61&	86&	6&	31&	56&	81&	1&	26&	51&	76&	101\\
39&	64&	89&	9&	34&	59&	84&	4&	29&	54&	79&	104&	24&	49&	74&	99&	19&	44&	69&	94&	14\\
57&	82&	2&	27&	52&	77&	102&	22&	47&	72&	97&	17&	42&	67&	92&	12&	37&	62&	87&	7&	32\\
\hline
75&	100&	20&	45&	70&	95&	15&	40&	65&	90&	10&	35&	60&	85&	5&	30&	55&	80&	0&	25&	50\\
93&	13&	38&	63&	88&	8&	33&	58&	83&	3&	28&	53&	78&	103&	23&	48&	73&	98&	18&	43&	68\\
6&	31&	56&	81&	1&	26&	51&	76&	101&	21&	46&	71&	96&	16&	41&	66&	91&	11&	36&	61&	86\\
24&	49&	74&	99&	19&	44&	69&	94&	14&	39&	64&	89&	9&	34&	59&	84&	4&	29&	54&	79&	104\\
42&	67&	92&	12&	37&	62&	87&	7&	32&	57&	82&	2&	27&	52&	77&	102&	22&	47&	72&	97&	17\\
\hline
60&	85&	5&	30&	55&	80&	0&	25&	50&	75&	100&	20&	45&	70&	95&	15&	40&	65&	90&	10&	35\\
78&	103&	23&	48&	73&	98&	18&	43&	68&	93&	13&	38&	63&	88&	8&	33&	58&	83&	3&	28&	53\\
96&	16&	41&	66&	91&	11&	36&	61&	86&	6&	31&	56&	81&	1&	26&	51&	76&	101&	21&	46&	71\\
9&	34&	59&	84&	4&	29&	54&	79&	104&	24&	49&	74&	99&	19&	44&	69&	94&	14&	39&	64&	89\\
27&	52&	77&	102&	22&	47&	72&	97&	17&	42&	67&	92&	12&	37&	62&	87&	7&	32&	57&	82&	2\\
\hline
45&	70&	95&	15&	40&	65&	90&	10&	35&	60&	85&	5&	30&	55&	80&	0&	25&	50&	75&	100&	20\\
63&	88&	8&	33&	58&	83&	3&	28&	53&	78&	103&	23&	48&	73&	98&	18&	43&	68&	93&	13&	38\\
81&	1&	26&	51&	76&	101&	21&	46&	71&	96&	16&	41&	66&	91&	11&	36&	61&	86&	6&	31&	56\\
99&	19&	44&	69&	94&	14&	39&	64&	89&	9&	34&	59&	84&	4&	29&	54&	79&	104&	24&	49&	74\\
12&	37&	62&	87&	7&	32&	57&	82&	2&	27&	52&	77&	102&	22&	47&	72&	97&	17&	42&	67&	92\\
\hline
30&	55&	80& 0&	25&	50&	75&	100&	20&	45&	70&	95&	15&	40&	65&	90&	10&	35&	60&	85&	5\\
48&	73&	98&	18&	43&	68&	93&	13&	38&	63&	88&	8&	33&	58&	83&	3&	28&	53&	78&	103&	23\\
66&	91&	11&	36&	61&	86&	6&	31&	56&	81&	1&	26&	51&	76&	101&	21&	46&	71&	96&	16&	41\\
84&	4&	29&	54&	79&	104&	24&	49&	74&	99&	19&	44&	69&	94&	14&	39&	64&	89&	9&	34&	59\\
102&	22&	47&	72&	97&	17&	42&	67&	92&	12&	37&	62&	87&	7&	32&	57&	82&	2&	27&	52&	77\\
\hline
15&	40&	65&	90&	10&	35&	60&	85&	5&	30&	55&	80&	0&	25&	50&	75&	100&	20&	45&	70&	95\\
33&	58&	83&	3&	28&	53&	78&	103&	23&	48&	73&	98&	18&	43&	68&	93&	13&	38&	63&	88&	8\\
51&	76&	101&	21&	46&	71&	96&	16&	41&	66&	91&	11&	36&	61&	86&	6&	31&	56&	81&	1&	26\\
69&	94&	14&	39&	64&	89&	9&	34&	59&	84&	4&	29&	54&	79&	104&	24&	49&	74&	99&	19&	44\\
87&	7&32&57&82&2&	27&	52&	77&	102&22&47&72&	97&	17&	42&	67&	92&	12&	37&	62
\end{array}
\right].\]
\normalsize

Therefore, 
\[
M' = 
\begin{bmatrix}
A_{0,0} & A_{0,1} & A_{0,2} & A_{0,3} & A_{0,4} & A_{0,5} & A_{0,6} \\
A_{0,4} & A_{0,5} & A_{0,6} & A_{0,0} & A_{0,1} & A_{0,2} & A_{0,3} \\
A_{0,1} & A_{0,2} & A_{0,3} & A_{0,4} & A_{0,5} & A_{0,6} & A_{0,0} \\
A_{0,5} & A_{0,6} & A_{0,0} & A_{0,1} & A_{0,2} & A_{0,3} & A_{0,4} \\
A_{0,2} & A_{0,3} & A_{0,4} & A_{0,5} & A_{0,6} & A_{0,0} & A_{0,1} \\
A_{0,6} & A_{0,0} & A_{0,1} & A_{0,2} & A_{0,3} & A_{0,4} & A_{0,5} \\
A_{0,3} & A_{0,4} & A_{0,5} & A_{0,6} & A_{0,0} & A_{0,1} & A_{0,2}
\end{bmatrix}.
\]

Now Lemma \ref{matrixcoloring} can be used to color each $A_{i,j}$ in the following way:

\[
\chi(A_{i,j}) =
\begin{bmatrix}
R & B & B\\
B & R & B\\
R & B & R\\
B & R & B\\
B & B & R
\end{bmatrix}.
\]

Of course only the first row of $M'$ is needed to get the coloring of $\mathbb{Z}_n$.\\
Finally, we give a proof of Theorem \ref{reducedsubsetsize3}.

\begin{proof}
By Proposition \ref{reduce0abto01b} we have either Case $i$, where $S$ is equivalent to a set of the form $\{0, 1, b\}$, or Case $ii$, where $\gcd(a,n)$ and $\gcd(b,n)$ are both greater than 1.  Proposition \ref{P3iffcond3} characterizes the sets $S$ for which $p_n(S) = 3$, and Theorem \ref{theconclof01ell} shows  show that $p_n(S) = 2$ for all other sets $S$ in Case $i$, except when $n=7$ and $b=3$.  Then Lemma \ref{2poly} takes care of Case $ii$.                    
\end{proof}

\textbf{Acknowledgments.} The work of Michael Young is supported in part by the National Science Foundation through grant  $\#1719841$.

\end{document}